\tikzset{
	symbol/.style={
		draw=none,
		every to/.append style={
			edge node={node [sloped, allow upside down, auto=false]{$#1$}}}
	}
}
\declaretheorem[name=Theorem,numberwithin=section]{thm}
\declaretheorem[name=Conjecture]{conj}
\declaretheorem[name=Conjecture]{conjj}
\DeclareSymbolFontAlphabet{\mathbbm}{bbold}
\DeclareSymbolFontAlphabet{\mathbb}{AMSb}%
\newcommand{\C}{\mathbb{C}}
\newcommand{\R}{\mathbb{R}}
\newcommand{\Z}{\mathbb{Z}}
\newcommand{\T}{\mathbb{T}}
\newcommand{\bL}{\underline{L}}
\newcommand{\Cx}{\C^\times}
\newcommand{\pt}{\text{pt}}
\newcommand{\pr}{\operatorname{pr}}
\newcommand{\Hom}{\operatorname{Hom}}
\newcommand{\lt}{\mathfrak{t}}
\newcommand{\lig}{\mathfrak{g}}
\newcommand{\HH}{\mathcal{H}}
\newcommand{\spec}{\text{Spec }}
\newcommand{\hol}{\operatorname{Hol}}
\newcommand{\D}{\mathbb{D}}
\newcommand{\rank}{\operatorname{rank}}
\renewcommand\thmcontinues[1]{Continued}
\renewcommand{\ker}{\operatorname{ker}}
\newcommand{\fz}{\tau}
\newcommand{\F}{\mathbb{F}}
\newcommand{\WH}{W\text{-}\mathrm{Hilb}_{\lt_\C}}
\newcommand{\WHH}{W\text{-}\mathrm{Hilb}_{\lt'_\C}}
\newcommand{\NH}{\mathcal{H}}
\newcommand{\oh}{\mathcal{O}}
\newcommand{\bbL}{\mathbb{L}}
\newcommand{\NHcl}{\operatorname{cl}_{\NH}(A)}
\theoremstyle{plain}
\newtheorem{lem}[thm]{Lemma}
\newtheorem{prop}[thm]{Proposition}
\newtheorem{cor}[thm]{Corollary}
\theoremstyle{definition}
\newtheorem{df}[thm]{Definition}
\newtheorem{exmp}[thm]{Example}
\theoremstyle{remark}
\newtheorem{rem}[thm]{Remark}
\crefname{thm}{theorem}{theorems}
\Crefname{thm}{Theorem}{Theorems}
\crefname{prop}{proposition}{propositions}
\Crefname{prop}{Proposition}{Propositions}
\crefname{conj}{conjecture}{conjectures}
\Crefname{conj}{Conjecture}{Conjectures}
\crefname{conjj}{conjecture}{conjectures}
\Crefname{conjj}{Conjecture}{Conjectures}
\crefname{lem}{lemma}{lemmas}
\Crefname{lem}{Lemma}{Lemmas}
\crefname{cor}{corollary}{corollaries}
\Crefname{cor}{Corollary}{Corollaries}
\crefname{df}{definition}{definitions}
\Crefname{df}{Definition}{Definitions}
\crefname{assumption}{assumption}{assumptions}
\Crefname{assumption}{Assumption}{Assumptions}
\crefname{exmp}{example}{examples}
\Crefname{exmp}{Example}{Examples}
\crefname{rem}{remark}{remarks}
\Crefname{rem}{Remark}{Remarks}
\crefname{note}{note}{notes}
\Crefname{note}{Note}{Notes}
\title{2d Mirrors in nonabelian 3d Mirror Symmetry}
\author{Kifung Chan and Naichung Conan Leung}
\date{}
\begin{document}
\begin{abstract}
We establish a connection between (nonabelian) equivariant 2d mirror symmetry and the geometry of Coulomb branches. In the context of 3d mirror symmetry, a Hamiltonian $G$-manifold $Y$ is expected to determine a complex Lagrangian subvariety $\bbL^G_Y$ of the Coulomb branch. Using transverse Hilbert schemes and nil-Hecke algebras, we develop an algebro-geometric framework for studying Coulomb branches and their Lagrangian subvarieties and formulate criteria for the existence of $\bbL^G_Y$ in terms of equivariant 2d mirror symmetry. We then reinterpret these criteria in terms of Lagrangian displaceability and prove the resulting statements using Lagrangian Floer theory.

\end{abstract}
\maketitle
\section{Introduction}\label{section:introduction}
This paper studies equivariant 2d mirror symmetry and relates it to 3d mirror symmetry and the geometry of Coulomb branches.

Coulomb branches are central objects in 3d mirror symmetry, and they also provide a natural framework for understanding equivariant 2d mirror symmetry. Teleman proposed \cite{tel2014,tel2021} that, if $Y$ is a compact symplectic manifold equipped with a Hamiltonian action of a compact Lie group $G$, then its equivariant quantum cohomology $QH_G^*(Y)$ should define a complex Lagrangian subvariety $\bbL^G_Y$ in the pure-gauge Coulomb branch $C_G$. This construction was developed in \cite{GMP} and generalized in \cite{chan2025quantumcohomologyshiftoperators}. From the perspective of 3d mirror symmetry, $Y$ is naturally interpreted as a 3d A-brane in the Higgs branch $T^*[\pt/G_{\C}]$, while $\bbL^G_Y$ is the corresponding 3d B-brane in the Coulomb branch $C_G$. Thus Teleman's conjecture predicts a correspondence between 3d branes on the Higgs and Coulomb sides.

In this paper, we study the construction of $\bbL_Y^G$ from the perspective of $2d$ mirror symmetry. This naturally
leads us to investigate the geometry of Coulomb branches, and provides new insights into equivariant $2d$ mirror symmetry. In the first half
of the paper, we study the algebraic and geometric structure of
the Coulomb branches and reduce the construction of $\bbL_Y^G$
to several 2d mirror symmetric predictions (\Cref{introconj}). We formulate these predictions in terms of
Lagrangian Floer theory and prove them (\Cref{Introthm1,IntroThm,Introthm3}).

The nonabelian case is the main focus of the paper, where Coulomb branches are more complicated and the mirror construction requires new \emph{nonabelianization} ideas. We begin with the abelian case.

\subsection*{Mirror construction, the abelian case}
Suppose $G=T$ is a compact torus. In this case, the Coulomb branch is the cotangent bundle
\[
C_T\cong T^*\check T_\C
\]
of the complexified dual torus $\check T_\C$. The mirror construction of $\bbL^T_Y$ in this case is also due to Teleman (see also \cite{BDGH} for a physical derivation). Suppose that $Y$ is a symplectic manifold, and let $(Y^\vee,f)$ be its Landau--Ginzburg mirror. Teleman's idea is that a Hamiltonian $T$-action on $Y$ corresponds, on the mirror side, to a holomorphic map
\[
\fz_T:Y^\vee\to \check T_\C.
\]
We call $\fz$ the Teleman map. The Teleman map induces a Lagrangian correspondence from $T^*Y^\vee$ to $T^*\check T_\C$, and $C^T_Y$ is obtained by applying this Lagrangian correspondence to the graph of $df$. Explicitly, $C^T_Y$ is the image
\begin{equation}\label{eq:intro_ZTY}
    Z^T_Y=\left\{
(p,z,a)\in Y^\vee\times \check T_\C\times \lt_\C
:
z=\fz_T(p),\quad
df|p=\fz_T^da|_p
\right\}
\end{equation}
under the projection 
\[
Y^\vee\times \check T_\C\times \lt_\C\to \check T_\C\times \lt_\C\cong T^*\check T_\C.
\]
We remark that $C^T_Y$ is an immersed Lagrangian of $T^*\check T_\C$ when the Lagrangian composition involved is transverse. In general, one should use the derived Lagrangian composition, which defines a $0$-shifted Lagrangian in $T^*\check T_\C$; see \cite{PTVV}. 

\subsection*{Nonabelian Coulomb branches}
In the nonabelian case, the Coulomb branch is no longer a cotangent bundle in general, and its geometry is substantially more complicated. Following \cite{BielawskiFoscolo}, we study it via transverse Hilbert schemes. Let $T\subset G$ be a maximal torus with Lie algebra $\lt$ and Weyl group $W$. In \Cref{section:transverse_W-Hilbert_scheme_and_Nil-Hekce_ring}, we associate to any affine scheme $X$ equipped with a $W$-equivariant morphism to $\lt_\C$ its transverse $W$-Hilbert scheme $\WH(X)$. The following interpretation of $\WH(X)$ is the main result of \Cref{section:transverse_W-Hilbert_scheme_and_Nil-Hekce_ring}.
\begin{thm}[=\Cref{thm:WHilb=nilHecke_closure}]\label{introthm:WHilb=nilclosure}
    If $X=\operatorname{Spec} A$, then $\WH(X)=\operatorname{Spec}\NHcl^W$,
    where $\NHcl$ is the nil-Hecke closure of $A$.
\end{thm}
Here, the nil-Hecke closure of $A$ is the smallest algebra containing $A$ and stable under the nil-Hecke action. See \Cref{section:transverse_W-Hilbert_scheme_and_Nil-Hekce_ring} for more details. The universal property of $\WH(X)$ gives a morphism
\[
\jmath:C_G\to \WH(C_T).
\]
The morphism $\jmath$ need not be an isomorphism, but we show that it is always an open embedding, and we describe its image in \Cref{cor:image_of_jmath}. However, it is sometimes more useful, and more conceptual, to characterize the Coulomb branch by its stability under Hamiltonian reduction, as we now explain.

Suppose that we have a short exact sequence of compact connected Lie groups, and a short exact sequence of their maximal tori,
\begin{align}
    1\to G\to G'\to Q\to 1,\label{introsescompact}\\
    1\to T\to T'\to Q\to 1,\label{introsestor}
\end{align}
where $Q$ is a compact torus. Then $\check Q_\C$ acts on $\check T'_\C$, and this action lifts to a complex Hamiltonian action on $C_{T'}=T^*\check T'_\C$. It can be checked that this induces a complex Hamiltonian action of $\check Q_\C$ on $\WHH(C_{T'})$, together with a natural morphism
\begin{equation}\label{introeq:WHH_to_WH}
    \WHH(C_{T'})/\!/\check Q_\C\to \WH(C_T).
\end{equation}
We denote the moment maps on $C_{T'}$ and $\WHH(C_{T'})$ by $\bbmu$ and $\widetilde{\bbmu}$, respectively. We have the following characterization of Coulomb branches.
\begin{thm}[=\Cref{thm:Coulombbranch=Whilb}]\label{introthm_CG=Whilb}
    For each such $G'$, the morphism \eqref{introeq:WHH_to_WH} is an open embedding. Moreover, the Coulomb branch $C_G\subset \WH(C_T)$ is the intersection of the images of \eqref{introeq:WHH_to_WH}, as $G'$ varies.
\end{thm}
\Cref{introthm_CG=Whilb} can be deduced from the main theorem of Bielawski--Foscolo \cite{BielawskiFoscolo}. Although, as explained in \cite{webster2026geometrycoulombbranches}, the main theorem of \Cite{BielawskiFoscolo} does not hold in full generality, both the statement and the proof remain valid for pure-gauge Coulomb branches. We nevertheless include a proof of \Cref{introthm_CG=Whilb}, both for completeness and because it provides useful insight into the mirror construction of 3d branes.

\subsection*{Mirror construction, the nonabelian case}
Our proposal is that the nonabelian mirror brane $\bbL^G_Y$ should be obtained from the abelian mirror brane $\bbL^T_Y$ via the transverse Hilbert-scheme construction. More precisely, we make the following \emph{nonabelianization} conjecture. 
\begin{conj}\label{introconj}
\begin{enumerate}
    \item The subvariety $\bbL^T_Y\subset C_T$ is $W$-invariant.
    \item The natural morphism $ \WH(\bbL^T_Y)\longrightarrow \bbL^T_Y/W$ is an isomorphism.
    \item $\WH(\bbL^T_Y)$ is contained in $C_G$.
\end{enumerate}
\end{conj}

We give some comments on the conjecture.

Part~\textup{(1)}. The $W$-invariance holds in many examples. If one regards $Y^\vee$ as a moduli space of $T$-invariant Lagrangian branes, then there is a natural $W$-action on these branes. Moreover, if $f$ is defined by counting holomorphic discs, then the Hamiltonian $G$-action implies that $f$ should be $W$-invariant. This would imply the $W$-invariance of $\bbL^T_Y$. However, there are also examples, such as Rietsch's mirror for flag varieties, where $Y^\vee$ does not admit a $W$-action, but $\bbL^T_Y$ is nevertheless $W$-invariant.

Part~\textup{(2)}.
By functoriality of $\WH$, there is an embedding $\WH(\bbL^T_Y)\subset \WH(C_T)$ once we know $\bbL^T_Y$ is $W$-invariant. However, applying $\WH$ may result in a loss of information; for instance, one can have $X\neq\varnothing$ but $\WH(X)=\varnothing$. The condition $\WH(\bbL^T_Y)=\bbL^T_Y/W$ rules this out for $\bbL^T_Y$. Indeed, this is equivalent to saying that the coordinate ring $\C[\bbL^T_Y]$ is equal to its nil-Hecke closure, so no information is lost in the process.

In many cases, including Rietsch's mirror for flag varieties, the coordinate ring $\C[\bbL^T_Y]$ is known to be isomorphic to the equivariant quantum cohomology ring $QH_T(Y)$. It can be shown that $QH_T(Y)$ is always equal to its own nil-Hecke closure, giving a direct verification of the second statement in these cases.

Part~\textup{(3)}.
A conceptual approach to proving Part~\textup{(3)} is to find, for each short exact sequence as in \eqref{introsescompact}, a Hamiltonian $G'$-space $Y'$ such that the image of
\begin{equation}\label{eq:intro_stability_Hamiltoninan_reduction}
    \bbL^{T'}_{Y'}\cap \bbmu^{-1}(0)
\longrightarrow
\bbmu^{-1}(0)/\check Q_\C=C_T
\end{equation}
is equal to $\bbL^T_Y$. This would imply that the image of
\[
\left(\WHH(\bbL^{T'}_{Y'})\cap \widetilde{\bbmu}^{-1}(0)\right)/\check Q_\C
\longrightarrow
\WH(C_T)
\]
is equal to $\WH(\bbL^T_Y)=\bbL^G_Y$. It then follows from \Cref{introthm_CG=Whilb} that $\bbL^G_Y\subset C_G$.

In practice, we will take $Y'=Y$ whenever the $G$-action on $Y$ extends to a $G'$-action. This is the case, for example, when $Y=G/T$ is a flag variety. In general, one may need to take
\[
Y'=(Y\times M)/\Gamma
\]
for some auxiliary symplectic manifold $M$ and finite group $\Gamma$.

We will also deduce the following criterion. 
\begin{lem}\label{introlem:lift}
Suppose that $\bbL^T_Y$ is $W$-invariant and reduced, and that $\bbL^{T^{s_\alpha}}_Y$ is reduced for every root $\alpha$.
\begin{enumerate}[(i)]
    \item The natural morphism $\WH(\bbL^T_Y)\to \bbL^T_Y/W$ is an isomorphism if and only if, for every root $\alpha$ and every $(t,a)\in\bbL^T_Y$ satisfying $\alpha(a)=0$, one has $ s_\alpha(t)=t$.

    \item One has $\WH(\bbL^T_Y)\xrightarrow{\cong}\bbL^T_Y/W\subset C_G$ if and only if, for every root $\alpha$ and every $(t,a)\in\bbL^T_Y$ satisfying $\alpha(a)=0$, one has $\alpha^\vee(t)=1$.
\end{enumerate}
\end{lem}
The reducedness assumptions on $\bbL^{T^{s_\alpha}}_Y$ can be removed by interpreting the equalities appearing in \textup{(i)} and \textup{(ii)} scheme-theoretically; see \Cref{rem:remove_reducedness_assumption}.

\subsection*{SYZ and Lagrangian Floer theory}
In \textit{Mirror Symmetry is T-duality}, Strominger, Yau, and Zaslow \cite{SYZ} proposed that $Y^\vee$ should be constructed as a moduli space of Lagrangian branes $\bL=(L,E)$ in $Y$, where $L$ is a Lagrangian submanifold and $E$ is a flat $\operatorname{U}(1)$-bundle on $L$. This picture was later extended to the Fano setting; see, for example, \cite{AurouxComp,FOOOI}. In this case, $Y^\vee$ is equipped with a holomorphic function $f$, the superpotential, defined by counting holomorphic discs.

In \Cref{section:Teleman_and_f,section:main_theorems,section:proof_of_theorems}, we use the SYZ framework to reinterpret \Cref{introconj} in terms of Lagrangian Floer theory and prove the resulting statements. We fix the following data:
\begin{itemize}
    \item $Y$ is a Hamiltonian $G$-manifold, with moment map $\mu:Y\to\lig^*$.
    
    \item $\widetilde B=B-\kappa$ is a $G$-equivariant $B$-field on $Y$.
    
    \item $\bL=(L,E)$ is a $T$-invariant Lagrangian brane, where $L\subset Y$ is a compact Lagrangian submanifold and $E\to L$ is a line bundle with connection satisfying $F_E=iB|_L$.
\end{itemize}
We impose the standard geometric assumptions required to define Lagrangian Floer theory. The relevant definitions, setup, and conventions are given in \Cref{section:Teleman_and_f,section:main_theorems}, and \Cref{appendix:Floer_theory}.

One should think of $\bL$ as defining a point of $Y^\vee$. We define
\begin{equation*}
df(\bL)
=
\sum_{\beta}
n_\beta
\exp\left(i\int_\beta B\right)
\hol_{\partial\beta}(E)
\T^{\int_\beta\omega}
[\partial\beta]
\in H_1(L;\F),
\end{equation*}
where $\F$ is the universal Novikov field and $\T$ is the Novikov parameter. The sum runs over Maslov-index-two disc classes $\beta\in\pi_2(Y,L)$, and $n_\beta$ denotes the count of holomorphic discs representing $\beta$; see \cite{AurouxComp,FOOOI}. We also define
\begin{equation*}
\fz_T(\bL)
=
\exp(-\mu_T(L))\cdot\HH_T(E)
\in
\exp(\lt^*)\cdot\check T
\cong T_\C^\vee.
\end{equation*}
If $\widetilde B=0$, then $\HH_T(E)$ is the monodromy of $E$ along the $T$-orbits. In general, $\HH_T(E)=\exp\bigl(-i\kappa_T(p)\bigr)\hol_T^p(E)
\in\check T$
is the equivariant monodromy defined in \Cref{section:Teleman_and_f}. We discuss the well-definedness and holomorphicity of $df$ and $\fz_T$ in \Cref{section:Teleman_and_f}. For the SYZ perspective on these constructions, we refer the reader to \cite{AurouxComp,Paper1}.

It is straightforward to check that $df$ and $\fz_T$ are $W$-equivariant, in accordance with Part~\textup{(1)} of \Cref{introconj}. Moreover, the condition $df|_{p}=da|_{p}$ in \eqref{eq:intro_ZTY} for $p=\bL$ is now interpreted as
\[
df(\bL)=o_*(a),
\]
where $a\in H_1(T;\F)$, and $o_*:H_1(T;\F)\to H_1(L;\F)$ is the homomorphism induced by an orbit map. The following theorem is a Floer-theoretic reformulation of Part~\textup{(2)} of \Cref{introlem:lift}.
\begin{thm}[=\Cref{thm2}]\label{IntroThm}
Assume that the minimal Maslov number of $L$ is at least $2$ and that $H^\bullet(L)$ is generated in degree one. For any root $\alpha$, if $df(\bL)\in o_*\bigl(\ker(\alpha)\bigr)$, then $\fz_T(\bL)\in\ker(\alpha^\vee)$.
\end{thm}
The assumptions on the minimal Maslov number and the generation of $H^\bullet(L)$ in degree one are technical hypotheses that ensure sufficient control over holomorphic-disc counts. Under these hypotheses, the condition $df(\bL)=0$ implies that $HF^\bullet(\bL,\bL)\neq 0$. Moreover,
\[
\bigcap_{\alpha\in\Phi(G,T)}\ker(\alpha^\vee)\subset\check T_\C
\]
is equal to the center $Z(\check \check G_\C)$ of the Langlands dual group $\check \check G_\C$. Hence, under these hypotheses, the following theorem is a corollary of \Cref{IntroThm}. We nevertheless prove it without either hypothesis.
\begin{thm}[=\Cref{thm1}]\label{Introthm1}
If $HF^\bullet(\bL,\bL)\neq 0$, then $\fz_T(\bL)\in Z(\check G_\C)$.
\end{thm}
Since the nonvanishing of $HF^\bullet(\bL,\bL)$ implies that $\bL$ is Hamiltonian nondisplaceable, \Cref{Introthm1} can therefore be interpreted as imposing constraints on nondisplaceable Lagrangian branes.

Although the formulation of \Cref{IntroThm,Introthm1} in terms of Part~\textup{(ii)} of \Cref{introlem:lift} is neat, it is easier to prove the corresponding version of Part~\textup{(i)}, obtained by replacing $\ker(\alpha^\vee)$ and $Z(\check G_\C)$ with $\check T_\C^{s_\alpha}$ and $\check T_\C^W$, respectively. One can then deduce \Cref{IntroThm,Introthm1} by proving the version of \Cref{introconj}\textup{(3)} explained below.

Given a short exact sequence \eqref{ses_compact}, we may write
\[
G'=(G\times Q')/\Gamma,
\]
where $Q'$ is a compact torus and $\Gamma$ is a finite subgroup of $Z(G\times Q')$. Choose a compact manifold $Z$ equipped with a free $G\times Q'$-action, and set
\[
Y'=(Y\times T^*Z)/\Gamma.
\]
We can choose $Z$ so that $\pi_1(Z)=\pi_2(Z)=0$.
\begin{thm}[=\Cref{thm3}]\label{Introthm3}
Let $L'=(L\times Z)/\Gamma$. Then there exist an equivariant $B$-field on $Y'$ and a Lagrangian brane $\bL'=(L',E')$ such that $df_{Y'}(\bL')$ is the image of $df_Y(\bL)$ under the natural map
\[
 H_1(L;\F)\to H_1(L;\F)\oplus H_1(Z;\F)\to H_1(L\times Z;\F)\to H_1(L';\F),
\]
and $\fz_{T}(\bL)$ is the image of $\fz_{T'}(\bL')$ under the quotient map
\[
 \check \rho:\check T'_\C\to \check T_\C.
\]
\end{thm}
\Cref{Introthm3} is the Floer-theoretic formulation of \eqref{eq:intro_stability_Hamiltoninan_reduction} and therefore addresses \Cref{introconj}\textup{(3)}.

\section*{Acknowledgements}
The authors thank Kwokwai Chan, Justin Hilburn, Chin Hang Eddie Lam, Siu-Cheong Lau, Yan-Lung Leon Li, Cheuk Yu Mak, Michael McBreen, Ziming Nikolas Ma, Tao Su, Yat Hin Suen, Ju Tan, and Tudor Dimofte for valuable discussions at various stages of this project. The work of N. Leung described in this paper was substantially supported by grants from the Research Grants Council of the Hong Kong Special Administrative Region, China (Project Nos. CUHK14301721, CUHK14306322, CUHK14305923 and CUHK14302224) and a direct grant from CUHK.

\section*{Notations}
We use the following notations throughout the paper.
\begin{itemize}
    \item $G$ is a connected compact Lie group with Lie algebra $\lig$, and $T\subset G$ is a maximal torus with Lie algebra $\lt$.
    \item $N(T)$ is the normalizer of $T$ in $G$, and $W=N(T)/T$ is the Weyl group.
    \item Let $\Phi=\Phi(G,T) \subset i\mathfrak{t}^*$ be the root system of $(G,T)$, and fix a system of positive roots $\Phi^+ \subset \Phi$ with simple positive roots $\alpha_1,\dots,\alpha_r$. The corresponding simple reflections are denoted by $s_1,\dots,s_r$.
    \item $\{\sigma_1,\dots,\sigma_m\}$ is a set of homogeneous algebraically independent generators for $\C[\lt_\C]^W$.
    \item $R$ is a $W$-invariant graded subspace of $\C[\lt_\C]$ such that the multiplication map $\C[\lt_\C]^W \otimes R \to \C[\lt_\C]$ is an isomorphism. In particular, $R$ is isomorphic to the regular representation of $W$.
    \item $\Lambda$ is the cocharacter lattice of $T$.
\end{itemize}

\section{The transverse \texorpdfstring{$W$}{W}-Hilbert scheme and Nil-Hecke ring}\label{section:transverse_W-Hilbert_scheme_and_Nil-Hekce_ring} 

Let $X=\spec A$ be an affine $\C$-scheme of finite type equipped with
a $W$-action, and let $\eta:X\to \lt_\C$ be a $W$-equivariant morphism.
\begin{df}
    The \emph{transverse $W$-Hilbert scheme} of $X$, denoted by $\WH(X)$, is the moduli functor that assigns to each $\mathbb C$-scheme $S$, on which $W$ acts
    trivially, the set of $W$-invariant closed subschemes 
    \[
    Z\subset X\times S
    \]
    satisfying the \emph{transverse condition}, i.e., the natural morphism
    \[
    R\otimes\oh_S \longrightarrow p_{S*}\oh_Z
    \]
    is an isomorphism. 
    
    Here elements of $R$ are regarded as functions on $\lt_\C$, and hence as functions on $Z$ via the composition
    \[
    Z\subset X\times S\xrightarrow{\eta_S} \lt_\C\times S \to \lt_\C,
    \]
    and $p_S:X\times S\to S$ denotes the projection.
\end{df}
We note that there is a natural morphism $\WH(X)\to X/W$. Indeed, let $S$ be a scheme, and let $S\to \WH(X)$ be a morphism corresponding to a closed subscheme $Z\subset X\times S$. Composing the inclusion of $Z$ with the projection $X\times S\to X$ gives a $W$-equivariant morphism $Z\to X$. Passing to the quotients by $W$, we obtain
\[
S\cong Z/W\longrightarrow X/W.
\]
This also shows that 
\begin{equation}\label{eq:Z_in_fibre_product}
    Z\subset X\times_{X/S}S.
\end{equation}

\begin{lem}\label{lem:equivalent_condition_transverse}
The transverse condition on a closed subscheme $Z\subset X\times S$ is equivalent to
\begin{enumerate}[(i)]
\item $p_{S*} \mathcal{O}_Z$ is locally isomorphic to $R\otimes\mathcal{O}_S$ as $W$-equivariant $\oh_S$-modules; and
\item the map $\eta_S$ restricts to an isomorphism from $Z$ onto its scheme-theoretic image in $S \times \mathfrak{t}_\C$.
\end{enumerate}
\end{lem}
\begin{proof}
Suppose first that $Z$ satisfies the transverse condition. Then
condition~(i) holds by definition. Moreover, since $\oh_Z$ is
generated as an $\oh_S$-algebra by $\oh_S$ and the image
of
\[
    R\subset\C[\lt_\C],
\]
the natural morphism
\[
    \varphi:
    \C[\lt_\C]\otimes\oh_S
    \longrightarrow
    p_{S*}\oh_Z
\]
is surjective. Hence the restriction of $\eta_S$ to $Z$ is a closed
immersion, and therefore an isomorphism onto its scheme-theoretic
image. Thus condition~(ii) holds.

Conversely, suppose that conditions~(i) and~(ii) hold. We first note that the natural morphism
\begin{equation*}
    \oh_S\to (p_{S*}\oh_Z)^W
\end{equation*}
is an isomorphism. To see this, it suffices to check the assertion locally on $S$.
By condition~\textup{(i)}, we then have locally
\[
(p_{S*}\oh_Z)^W\cong (R\otimes\oh_S)^W=\oh_S.
\]
As a result, for each $f\in \C[\lt_\C]^W$, there exists some $a_f\in\oh_S$ such that $\varphi(f)=\varphi(a_f)$. Since $\varphi$ is surjective by condition~(ii), the decomposition
\[
    \C[\lt_\C]
    \cong
    \C[\lt_\C]^W\otimes R
\]
implies
\[
p_{S*\oh_Z}=\varphi(\C[\lt_\C]\otimes\oh_S)=\varphi(R\otimes\oh_S).
\]
In other words, the morphism
\begin{equation}\label{eq:surjectivefromR}
    \oh_S\otimes R
    \longrightarrow
    p_{S*}\oh_Z
\end{equation}
is surjective. By condition (i), $p_{S*}\oh_Z$ is a locally free sheaf over $S$ of rank $\dim R$. Hence \eqref{eq:surjectivefromR} must be an isomorphism, proving the transverse
condition.
\end{proof}

\begin{exmp}\label{exmp:WHil_Vxt}
    Let $V$ be a finite-dimensional $W$-representation, and let $X=\lt_\C\times V$ with $\eta:X\to \lt_\C$ the first projection. Let 
    \[
    \mathcal{Z}=\lt_\C\times (R\otimes V)^W.
    \]
    We will show that $\WH(X)$ is represented by
    \[
    \mathcal{Z}/W=\lt_\C/W\times(R\otimes V)^W.
    \]
    First note that $\mathcal{Z}$ defines a family of transverse
    subschemes over $\mathcal{Z}/W$. Indeed, there is a $W$-equivariant morphism
    \[
    \mathcal{Z}\longrightarrow \lt_\C\times V,\qquad (t,f)\longmapsto (t,f(t)),
    \]
    where an element $f\in (R\otimes V)^W\subset (\C[\lt_\C]\otimes V)^W$
    is regarded as a $W$-equivariant morphism $f:\lt_\C\to V$. 
    
    Let $S$ be a scheme, and let $S\to \WH(X)$ be an $S$-point corresponding to a $W$-invariant closed subscheme
    \[
    Z\subset \lt_\C\times V\times S.
    \]
    We claim that there exists a unique morphism $\varphi:S\to \mathcal{Z}/W$ such that $Z\cong \varphi^{-1}(\mathcal{Z})$. 
    
    Since the assertion is local on $S$, we may assume that $S$ is affine. Let $I\subset \C[\lt_\C\times V\times  S]$ be the ideal of $Z$. The transverse condition says that for each $x\in V^*\subset \C[V]$, there is a unique $x'\in R\otimes\mathcal{O}_S$ such that $x-x'\in I$. The assignment $x\mapsto x'$ defines an element of
    \[
    \operatorname{Hom}_W(V^*,R\otimes\mathcal{O}_S)
    =
     (R\otimes V)^W\otimes\mathcal{O}_S,
    \]
    and hence gives a morphism $\varphi_2:S\to (R\otimes V)^W$. On the other hand, the projection $Z\to \lt_\C$ descends to a morphism $\varphi_1:S=Z/W\to \lt_\C/W$. Now it is easy to see that $ \varphi=(\varphi_1,\varphi_2)$ is the required morphism.
\end{exmp}

It is easy to deduce the representability of $\WH(X)$ from the representability of the ordinary Hilbert scheme \cite{GrothendieckHilbertSchemes,nitsure2005constructionhilbertquotschemes}. We will also give an alternative construction of $\WH(X)$ via nil-Hecke rings.
\begin{df}
Let $\NH$ be the subalgebra of $\operatorname{End}_{\C}(\C[\lt_\C])$ generated by $\C[\lt_\C]$ (acting by multiplication) and the divided-difference operators
\[
D_i=\frac{1}{\alpha_i}(1-s_i),
\qquad i=1,\dots,r.
\]
We call $\NH$ the nil-Hecke ring associated to $G$.

An $\NH$-algebra is a commutative $\C$-algebra $B$ equipped with an
$\NH$-module structure such that:
\begin{enumerate}
    \item the homomorphism $\C[\lt_\C]\to B$ given by $f\mapsto f\cdot 1$ is a ring homomorphism;
    \item the actions of $D_i$ satisfy the twisted Leibniz rule
    \[
    D_i(bb')
    =
    D_i(b)s_i(b')+bD_i(b'),
    \qquad b,b'\in B,
    \]
    where
    \[
    s_i=1-\alpha_iD_i
    \]
    is the induced action of the simple reflection $s_i$ on $B$.
\end{enumerate}
An $\NH$-algebra homomorphism between two $\NH$-algebras is a ring homomorphism intertwining the $\NH$-action.
\end{df}
\begin{rem}
    The twisted Leibniz rule implies that $W\subset\NH$ acts on $B$ by algebra automorphisms. 
\end{rem}

\begin{df} 
The \emph{nil-Hecke closure} of $A$ is an $\NH$-algebra $B$ equipped with a $W$-equivariant $\C[\lt_\C]$-algebra homomorphism $\psi : A \to B$ satisfying the following universal property: for any commutative $\NH$-algebra $B'$ and any $W$-equivariant $\C[\lt_\C]$-algebra homomorphism $\gamma : A \to B'$, there exists a unique $\NH$-algebra homomorphism $\gamma' : B \to B'$ such that $\gamma = \gamma' \circ \psi$. 
\end{df} 

\begin{thm}\label{thm:WHilb=nilHecke_closure} 
The functor $\WH(X)$ is represented by \[
\spec (\NHcl)^W,
\] 
and the universal family is $\spec (\NHcl)$.
\end{thm}

\begin{lem}\label{GinLemma}
Let $M$ be a $\C[\lt_\C]\rtimes W$-module. The following are equivalent:
\begin{enumerate}[(i)]
\item The natural map
\begin{equation*}
  \C[\lt_\C] \otimes_{\C[\lt_\C]^W}M^W\longrightarrow M
\end{equation*}
is an isomorphism.
\item The $\C[\lt_\C] \rtimes W$-module structure on $M$ extends uniquely to an $\NH$-module structure.
\end{enumerate}

In particular, a $W$-equivariant $\C[\lt_\C]$-algebra $A$ admits a unique $\NH$-algebra structure if and only if the natural map
\begin{equation*}
\theta:\C[\lt_\C]\otimes_{\C[\lt_\C]^W} A^W  \longrightarrow A
\end{equation*}
is an isomorphism.
\end{lem}

\begin{proof}
This is \cite[Lemma~7.15]{Ginzburg}. We also give an alternative proof here. First, (i) clearly implies (ii), because $D_i$ acts on $M^W \otimes_{\C[\lt_\C]^W} \C[\lt_\C]$ by the formula $D_i(p\otimes m)=D_i(p)\otimes m$.

We next claim that for any nonzero $\NH$-module $N$, we have $N^W\neq 0$. To see this, suppose that $N^W=0$, and let $x\in N$ be nonzero. Since $x$ is not $W$-invariant, there exists a simple reflection $s_{i_1}$ such that $s_{i_1}(x)\neq x$. This implies $D_{i_1}(x)\neq 0$, because $x-s_{i_1}(x)=\alpha_{i_1}D_{i_1}(x)$. Replacing $x$ by $D_{i_1}(x)$ and repeating the same argument, we find simple reflections $s_{i_1},\dots,s_{i_n}$ with $n>|\Phi^+|$ such that $D_{i_n}\cdots D_{i_1}(x)\neq 0$. This contradicts the nil-Coxeter relations, according to which any
product
\[
D_{i_n}\cdots D_{i_1}
\]
with $n>|\Phi^+|$ is zero; see, for example, \cite[Theorem~3.4]{BGG}.

Now suppose that (ii) holds, and let $K$ and $C$ be defined by the exact sequence of $\NH$-modules
\begin{equation*}
0\to K\to \C[\lt_\C] \otimes_{\C[\lt_\C]^W}M^W \to M\to C\to 0.
\end{equation*}
Since the functor $N\mapsto N^W$ is exact, we also have the exact sequence 
\begin{equation*}
0\to K^W\to (\C[\lt_\C] \otimes_{\C[\lt_\C]^W}M^W)^W\to M^W\to C^W\to 0.
\end{equation*}
Since
\begin{equation*}
(\C[\lt_\C] \otimes_{\C[\lt_\C]^W}M^W)^W=\C[\lt_\C]^W \otimes_{\C[\lt_\C]^W}M^W =M^W,
\end{equation*}
we have $K^W=C^W=0$, and hence $K=C=0$ by the claim.
\end{proof}
\begin{exmp}\label{exmp:NH_action_on_HG}
    Let $M$ be a topological space equipped with a $G$-action. Then the natural $\C[\lt_\C]\rtimes W$-action on $H_T^\bullet(M)$ extends to an $\NH$-action. This follows from \Cref{GinLemma} and the well-known isomorphism (see, for example, Proposition~1(ii) in \Cite{MR1649623})
    \[
    \C[\lt_\C]\otimes_{\C[\lt_\C]^W}H_G^\bullet(M)\cong H_T^\bullet(M).
    \]
\end{exmp}
\begin{proof}[Proof of \Cref{thm:WHilb=nilHecke_closure}]
First suppose that $\NHcl$ exists. Let $S$ be a scheme. We will show that a morphism 
\[
S\to \WH(X)
\]
corresponds uniquely to a morphism 
\[
S\to \operatorname{Spec}(\NHcl^W).
\]
In view of the uniqueness, we can assume $S=\spec B_0$ is affine. We will show that the following data are equivalent (up to isomorphism):
\begin{enumerate}[(i)]
    \item A morphism $S\to\WH(X)$.
    \item A closed subscheme $Z\subset X\times S$ satisfying the transverse condition.
    \item A pair $(B,\gamma)$, where $B$ is an $\NH$-algebra with $B_0=B^W$, and $\gamma:A\to B$ is a $W$-equivariant $\C[\lt_\C]$-algebra homomorphism.
    \item A $\C$-algebra homomorphism $\NHcl^W\to B_0$.
\end{enumerate}
(i) and (ii) are equivalent by definition. We now show that (ii) and (iii) are equivalent. 

Suppose we have (ii). Since $Z$ is a closed subscheme of the affine scheme $X\times S$, $Z$ is affine and we may write $Z=\spec B$. The restriction of the projection $X\times S\to X$ to $Z$ defines a ring homomorphism $\gamma:A\to B$. Moreover, the transverse condition implies $B^W=B_0$. This gives (iii).

Conversely, suppose we have (iii). Let $i:B_0\to B$ be the inclusion. Then the ring homomorphism
\[
i\otimes \gamma:B_0\otimes A\to B 
\]
defines a morphism
\[
Z\to X\times S,
\]
which is a closed embedding and satisfies the transverse condition by \Cref{GinLemma}. It is easy to see that the two processes above are inverse of each other, so (ii) and (iii) are equivalent.

We next show that (iii) and (iv) are equivalent. Suppose we have (iii). By the universal property of nil-Hecke closure, there is a unique $\NH$-algebra homomorphism $\gamma':\NHcl\to B$ with $\gamma=\gamma'\circ\psi$. Taking $W$-invariants, this gives the $\C$-algebra homomorphism $\NHcl^W\to B_0$. Moreover, we have
\[
\NHcl\cong \NHcl^W\otimes R,\qquad B\cong B_0\otimes R,
\]
which implies
\[
B\cong \NHcl\otimes_{\NHcl^W} B_0.
\]
This shows that $(B,\gamma)$ is determined, uniquely up to isomorphism, by the $\C$-algebra homomorphism $\NHcl^W\to B_0$. This shows (iii) and (iv) are equivalent.

As a result, $\WH(X)=\spec (\NHcl^W)$, and $\spec\NHcl$ is the universal family. Conversely, suppose $\WH(X)$ is represented by an affine scheme $\operatorname{Spec} B_0$ with universal family $\spec B$, then the above also shows that $B=\NHcl$.

It remains to show that $\NHcl$ exists. First note that if $A'$ is another $W$-equivariant $\C[\lt_\C]$-algebra for which $\operatorname{cl}_{\NH}(A')$ exists, and if $A\cong A'/I$ for some $W$-invariant ideal $I\subset A'$, then $\NHcl$ exists and is isomorphic to $\operatorname{cl}_{\NH}(A')/\NH\cdot I$, where $\NH\cdot I$ denotes the $\NH$-invariant ideal generated by the image of $I$. Moreover, since we can embed $X$ as a $W$-invariant closed subscheme of $\lt_\C\times V$ for some finite-dimensional $W$-representation $V$, we can reduce to the case where $X=\lt_\C\times V$. This case follows from \Cref{exmp:WHil_Vxt}.
\end{proof}

\begin{lem}\label{lem:smoothness_WHilb}
    If $X$ is smooth over $\lt_\C$, then $\WH(X)$ is smooth over $\lt_\C/W$. 
\end{lem}
\begin{proof}
Let $p\in \WH(X)$ be a closed point corresponding to a $W$-invariant closed subscheme $Z\subset X$. We show that $\WH(X)$ is smooth over $\lt_\C/W$ at $p$.

Choose a point $x\in Z$, and set $a=\eta(x)$. Let $W_x$ and $W_a$ denote the stabilizers of $x$ and $a$, respectively. The transverse condition implies that $\eta|_Z\to \lt_\C$ is a closed embedding, and in particular it is injective on closed points. It follows that $W_x=W_a$.

Put $W'=W_x=W_a$. The formal neighborhood of $p$ in $\WH(X)$ depends only on the formal neighborhood $\widehat X_Z$ of the embedding $Z\subset X$. Since $Z$ is supported on the $W$-orbit of $x$, we have a decomposition
\[
\widehat X_Z=\bigsqcup_{\sigma\in W/W'}\sigma\widehat X_x.
\]
For any scheme $S$, a $W$-invariant closed subscheme $\mathcal Z\subset \widehat X_Z\times S$ is equivalent to a $W'$-invariant closed subscheme $\mathcal Z_x\subset \widehat X_x\times S$
by the formula $\mathcal Z=\bigcup_{\sigma\in W/W'}\sigma(\mathcal Z_x)$. Moreover, $\mathcal Z$ is transverse if and only if $\mathcal Z_x$ is transverse. Therefore the formal neighborhood of $p$ in $\WH(X)$ is isomorphic to the formal neighborhood of $p_x=[Z_x]$ in the transverse $W'$-Hilbert scheme of $\widehat X_x$.

Thus, replacing $W$ by $W'$, we reduce to the case where $x$ is fixed by $W$. Since $X$ is smooth over $\lt_\C$, there exists a finite-dimensional $W$-representation $V$ such that $\widehat X_x$ is $W$-equivariantly isomorphic to the formal neighborhood of $(a,0)$ in $\lt_\C\times V$, and under this isomorphism the map $\eta$ corresponds to the projection $\lt_\C\times V\to \lt_\C$. Hence the desired smoothness follows from \Cref{exmp:WHil_Vxt}. 
\end{proof}

\section{The pure-gauge Coulomb branch}\label{section:pure_gauge_Coulomb_branch}
Consider the space of polynomial loops in $G$ based at the identity:
\[
\Omega G=\{\gamma:S^1\to G: \gamma\text{ is a polynomial map, and }\gamma(1)=e\}.
\]
The Pontryagin product defines a commutative algebra structure on
$H_\bullet^G(\Omega G)$; see \cite{BFM,BFN} for further details. We write 
\[
C_G=\spec H_\bullet^G(\Omega G).
\]
\begin{exmp}
    Suppose $G=T$ is a compact torus. Let $\Lambda$ be the cocharacter lattice of $T$. Then $\Omega T$ is homotopy equivalent to the discrete set $\Lambda$. Hence, we have
    \[
    H_\bullet^T(\Omega T)\cong H_\bullet^T(\Lambda)=\bigoplus_{\lambda\in \Lambda}\C[\lt_\C]z^\lambda.
    \]
    Moreover, we have $z^\lambda\cdot z^{\lambda'}=z^{\lambda+\lambda'}$. As a result, we have $C_T\cong\check T_\C\times \lt_\C\cong T^*\check T_\C$.
\end{exmp}
\begin{exmp}\label{exmp:SO(3)_and_SU(2)_CG}
In general, by localization we can embed $H_\bullet^T(\Omega G)$ into the field of fractions of $H_\bullet^T(\Omega T)$.

If $G=\operatorname{SO}(3)$, then $H_\bullet^T(\Omega T)=\C[a,z^{\pm 1}]$. Using the fact that $H_\bullet^T(\Omega \operatorname{SO}(3))$ is generated by the Schubert classes corresponding to $\pm 1\in \Z\cong \Lambda$, one obtains
\begin{equation*}
H_\bullet^T(\Omega \operatorname{SO}(3))
=
\C\left[a,z^{\pm 1},\frac{z-z^{-1}}{a}\right],\qquad H_\bullet^G(\Omega \operatorname{SO}(3))
=
\C\left[a,z^{\pm 1},\frac{z-z^{-1}}{a}\right]^W.
\end{equation*}
Here, $W=\Z_2$ acts by sending $z$ to $z^{-1}$ and $a$ to $-a$. Similarly, if $G=\operatorname{SU}(2)$, then $H_\bullet^T(\Omega T)=\C[a,z^{\pm 1}]$, and
\begin{equation*}
H_\bullet^T(\Omega \operatorname{SU}(2))
=
\C\left[a,\frac{z-z^{-1}}{a},\frac{z+z^{-1}-2}{a^2}\right],\qquad H_\bullet^G(\Omega \operatorname{SU}(2))
=
\C\left[a,\frac{z-z^{-1}}{a},\frac{z+z^{-1}-2}{a^2}\right]^W.
\end{equation*}
\end{exmp}
We consider the diagram
\begin{equation}\label{Couldiag1}
\begin{tikzcd}
H_\bullet^T(\Omega G)&H_\bullet^T(\Omega T)\ar[l]\\
H_\bullet^G(\Omega G)\ar[u],
\end{tikzcd}
\end{equation}
where the upper arrow is induced by the pushforward $\Omega T\to \Omega G$, and the vertical arrow is the natural inclusion
$H_\bullet^G(\Omega G)\cong H_\bullet^T(\Omega G)^W\subset H_\bullet^T(\Omega G)$.
By \Cref{exmp:NH_action_on_HG}, the $\C[\lt_\C]\rtimes W$-module structure on $H_\bullet^T(\Omega G)$ extends to an $\NH$-module structure. Hence the diagram induces a morphism
\begin{equation}\label{eq_Coul_to_WHilb}
C_G\to \WH(C_T)
\end{equation}
by \Cref{thm:WHilb=nilHecke_closure}.

\begin{rem}
If $N$ is a representation of $G$, then the corresponding Coulomb branch $C_{G,N}$ is defined in \cite{BFN} by replacing $\Omega G$ with a certain space called the variety of triples. By a diagram similar to \eqref{Couldiag1}, one can show that there is a morphism $C_{G,N}\to \WH(C_{T,N})$.
\end{rem}

The morphism \eqref{eq_Coul_to_WHilb} is in general not an isomorphism. Nevertheless, its image can be characterized using the Hamiltonian
reductions discussed below.

\subsection*{Hamiltonian reduction and extension by a torus}
Suppose that we have short exact sequences
\begin{equation}\label{ses_compact}
    1\to G\to G'\to Q\to 1
\end{equation}
and
\begin{equation}\label{ses_torus}
    1\to T\to T'\to Q\to 1 
\end{equation}
of compact connected Lie groups and of their maximal tori,
respectively, where $Q$ is a compact torus. There is a complex Hamiltonian action of $\check Q_\C$ on $C_{T'}=T^*\check T'_\C$, with moment map
\[
\bbmu:C_{T'}\cong \check T'_\C\times \lt'_\C\xrightarrow{\pr} \lt'_\C\to \mathfrak{q}_\C (\cong \operatorname{Lie}(\check Q_\C)^*),
\]
where $\pr$ is the projection and the last map is
induced on Lie algebras by the homomorphism $T'\to Q$. It is clear that $\bbmu$ is $W$-invariant, and hence induces a morphism
\[
\tilde{\bbmu}:\WHH(C_{T'})\to \mathfrak{q}_\C.
\]
As usual, we define
\begin{equation*}
    \C_{T'}/\!/\check Q_\C\triangleq \bbmu^{-1}(0)/\check Q_\C\cong C_{T},\qquad \WHH(C_{T'})/\!/\check Q_\C\triangleq ({\widetilde{\bbmu}})^{-1}(0)/\check Q_\C,
\end{equation*}
where the quotients are understood as affine quotients.

\begin{lem}
    There is a natural morphism
    \[
    \WHH(C_{T'})/\!/\check Q_\C\to \WH(C_{T}).
    \]
\end{lem}

\begin{proof}
    Let $S$ be a scheme, and let $S\to {\widetilde{\bbmu}}^{-1}(0)\subset \WHH(C_{T'})$ be a morphism, and let $Z\subset C_{T'}\times S$ be the corresponding closed
    subscheme. Since the pullback of $\widetilde{\bbmu}$ via $Z\to S\to \WH(C_{T'})$ is equal to the pullback of $\bbmu$ via $Z\to C_{T'}$, we have
    \[
    Z\subset \bbmu^{-1}(0)\times S=\check T'_\C\times \lt_\C.
    \]    
    Note that the composition
    \[
    Z\to  \bbmu^{-1}(0)\times S\to  C_T\times S
    \]
    is also a closed embedding by the transverse condition, and hence defines an element of $\WH(T^*\check T_\C)(S)$. This construction is functorial in $S$ and
    $\check Q_{\C}$-invariant. Hence it descends to the affine
    quotient and induces a morphism
    \[
    \WHH(T^*\check T'_\C)/\!/\check Q_\C\to \WH(T^*\check T_\C),
    \]
    as desired.
\end{proof}

The following theorem can be obtained from the main theorem of \cite{BielawskiFoscolo} in the pure-gauge case. As explained in \cite{webster2026geometrycoulombbranches}, the main
theorem of \emph{loc.\ cit.} does not hold in full generality;
however, both its statement and proof remain valid in the present
setting. For completeness, we provide a proof in \Cref{secion:proof_of_Coulombbranch=Whilb}.
\begin{thm}\label{thm:Coulombbranch=Whilb}
For every torus extension
\[
    1\longrightarrow G\longrightarrow G'
    \longrightarrow Q\longrightarrow 1,
\]
the natural morphism
\[
    \WHH(C_{T'})/\!/\check Q_\C\to \WH(C_{T})
\]
is an open embedding. Moreover, the morphism
\[
    C_G\longrightarrow
    \WH(C_{T})
\]
identifies $C_G$ with the scheme-theoretic intersection of the images
of these open embeddings, as $G'$ ranges over all such torus
extensions.
\end{thm}

\begin{exmp}
When $G=\operatorname{SO}(3)$ or $G=\operatorname{SU}(2)$, we have $C_T=T^*\Cx$, and (cf. \Cref{thm:WHilb=nilHecke_closure})
\[
\WH(C_T)=\operatorname{Spec}\C\left[a,z^{\pm 1},\frac{z-z^{-1}}{a}\right]^{W}.
\]
This scheme agrees with $C_G$ when $G=\mathrm{SO}(3)$, but not
when $G=\mathrm{SU}(2)$, as seen in \Cref{exmp:SO(3)_and_SU(2)_CG}.

Note that there is a short exact sequence
\[
1\to \operatorname{SU}(2)\to \operatorname{U}(2)\to \operatorname{U}(1)\to 1.
\]
We take $G=\operatorname{SU}(2)$, $G'=\operatorname{U}(2)$, and $Q=\operatorname{U}(1)$. Then
\[
\WH(C_{T'})
=
\operatorname{Spec}
\C\left[
a_1,a_2,z_1^{\pm 1},z_2^{\pm 1},
\frac{z_1-z_2}{a_1-a_2}
\right]^W.
\]
The action of $\check Q_\C\cong \Cx$ on $\WH(C_{T'})$ corresponds to the $\Z$-grading on the coordinate ring given by $\deg(z_i)=1$ and $\deg(a_i)=0$. The subscheme $({\widetilde{\bbmu}})^{-1}(0)$ is obtained by imposing $a_1=-a_2=a$. A direct computation shows that the Hamiltonian reduction is
\[
({\widetilde{\bbmu}})^{-1}(0)/\!/\check Q
=
\operatorname{Spec}
\C\left[
a,
\frac{z-z^{-1}}{a},
\frac{z+z^{-1}-2}{a^2}
\right]^W,
\]
where $z=z_1z_2^{-1}$. This is isomorphic to $C_{\operatorname{SU}(2)}$.
\end{exmp}

\section{Proof of Theorem~\ref{thm:Coulombbranch=Whilb}}\label{secion:proof_of_Coulombbranch=Whilb}
\begin{lem}\label{Liegroup_lemma}
Suppose that $G$ is a compact connected Lie group with no direct factor isomorphic to $\operatorname{Sp}(n)$, and let $\alpha$ be a root of $(G,T)$. Then there exists a cocharacter $\lambda$ of $T$ such that $\langle\alpha,\lambda\rangle=1$.
\end{lem}

\begin{proof}
We first explain why the lemma fails when $G=\operatorname{Sp}(n)$. In this case, the root system can be embedded into $\Z^n$ with standard basis $e_1,\dots,e_n$ such that 
\[ 
\Phi=\{\pm e_i\pm e_j:i\neq j\}\cup\{\pm 2e_i\}, \qquad \Lambda=\Z^n. 
\]
The issue occurs when $\alpha=\pm 2e_i$, since then $\alpha(\Lambda)\subset 2\Z$.

Suppose first that $G$ is simply connected and simple. Then, by checking the classification of compact simple Lie groups and the corresponding root data, one finds that the situation described above is the only case in which the lemma fails.

In general, we can write $G=G'/\Gamma$, where $G'$ is a product of copies of $\operatorname{U}(1)$ and simply connected simple groups, and $\Gamma$ is a finite subgroup of $Z(G')$. Since $\Lambda_{G'}\subset \Lambda_G$, the lemma can fail only when $G'$ contains an $\operatorname{Sp}(n)$ factor and $\alpha=\pm 2e_i$ in the notation above.

Consider such an $\operatorname{Sp}(n)$ factor. Since $G$ has no direct factor isomorphic to $\operatorname{Sp}(n)$, the image of $\Gamma$ under the projection to this factor must be nontrivial, and hence must be all of
\[
Z(\operatorname{Sp}(n))=\Z_2.
\]
Since $\Lambda_{G'}\to \Lambda_{\operatorname{Sp}(n)}$ is surjective and $\Lambda_{\operatorname{PSp}(n)}/\Lambda_{\operatorname{Sp}(n)}=Z(\operatorname{Sp}(n))$, the homomorphism
\begin{equation}\label{eq:map_cocharacter_lattice}
\Lambda_G\to \Lambda_{\operatorname{PSp}(n)}
\end{equation}
is also surjective. Hence, we complete the proof by choosing $\lambda$ whose image under \eqref{eq:map_cocharacter_lattice} is equal to the positive or negative fundamental coweight according to the sign of $\alpha$.
\end{proof}

\begin{proof}[Proof of \Cref{thm:Coulombbranch=Whilb}]
We first assume that $G$ has no direct factor isomorphic to $\operatorname{Sp}(n)$. We need to show that the induced homomorphism
\[
\Psi:\C[\WH(C_T)]\longrightarrow H^G_\bullet(\Omega G)
\]
is an isomorphism. 

Let $\mathcal{Z}\subset C_T\times \WH(C_T)$ be the universal family. Write $C_T=\spec A$. We have $\mathcal{Z}=\spec \NHcl$ and $\WH(C_T)=\spec (\NHcl)^W$ by \Cref{thm:WHilb=nilHecke_closure}. We first show that $\Psi$ is injective. By \Cref{lem:smoothness_WHilb}, $(\NHcl)^W$ is flat over $\C[\lt_\C]^W$, and hence
\[
\NHcl=\C[\lt_\C]\otimes_{\C[\lt_\C]^W}\NHcl^W
\]
is flat over $\C[\lt_\C]$. Let $B$ be the algebra obtained from $\C[\lt_\C]$ by localizing at all the roots. Then we have
\begin{equation}\label{eq:NHcl_injective}
    \NHcl\subset B\otimes_{\C[\lt_\C]}\NHcl.
\end{equation}
On the other hand, we have
\[
B\otimes_{\C[\lt_\C]}\NHcl=\operatorname{cl}_{\NH}(B\otimes_{\C[\lt_\C]} A)=B\otimes_{\C[\lt_\C]} A.
\]
Here, the first equality follows from the universal property of nil-Hecke closure, and the second equality holds because $B$, and hence $B\otimes_{\C[\lt_\C]}A$, is an $\NH$-algebra. Combining with \eqref{eq:NHcl_injective}, we see that the composition
\[
\NHcl\to B\otimes_{\C[\lt_\C]}A\subset \C(C_T)
\]
is injective, and hence $\Psi$ is also injective.

We will now show the surjectivity of $\Psi$. By \cite[Theorem~5.26]{BFN}, it suffices to show that the corresponding homomorphism
\[
\widetilde{\Psi}_t:\NHcl_t\to H^T_\bullet(\Omega G)_t
\]
is surjective, where the subscript means the localization at a point $t\in \lt_\C$ which lies in at most one root hyperplane.

By localization and \cite[Lemma~5.1]{BFN}, we have
\[
H^T_\bullet(\Omega G)_t=H^T_\bullet(\Omega Z_G(t))_t.
\]
If $t$ does not lie in any root hyperplane, then $Z_G(t)=T$, and the surjectivity of $\widetilde{\Psi}_t$ is clear.

Now suppose $t$ lies in exactly one hyperplane $\{\alpha=0\}$, and let $\lambda$ be the cocharacter of $T$ asserted by \Cref{Liegroup_lemma} such that $\langle \alpha,\lambda\rangle=1$. We may prove the surjectivity after replacing $G$ with $Z_G(t)$. In other words, we may assume the semisimple rank of $G$ is $1$. In this case, $H_\bullet^T(\Omega G)$ is generated over
$H_\bullet^T(\Omega T)$ by the Schubert class $[X_\lambda]$
corresponding to $\lambda$; see, for example, the proof of
\cite[Theorem~6.8]{BFN}. Note that $[X_{\lambda}]$ is equal to $D_{\alpha}(t^\lambda)$, which lies in $\NHcl$. This proves the surjectivity. We conclude that $\Psi$ is an isomorphism if $G$ has no direct $\operatorname{Sp}(n)$ factors.

In general, choose a short exact sequence
\[
1\to G\to G'\to Q\to 1
\]
as in \eqref{ses_compact} so that $G'$ does not contain any direct $\operatorname{Sp}(n)$ factor. This is possible because we can embed each direct $\operatorname{Sp}(n)$ factor into
\[
\operatorname{Sp}(n)\operatorname{U}(1)=(\operatorname{Sp}(n)\times \operatorname{U}(1))/\Z_2.
\]
We have $\WHH(C_{T'})\cong C_{G'}$. It follows from \cite[Proposition~3.18]{BFN} that the morphism
\begin{equation}\label{eq:reduction_map}
    \WHH(C_{T'})/\!/\check Q_\C\to \WH(C_{T}) 
\end{equation}
induces an isomorphism
\[
\WHH(C_{T'})/\!/\check Q_\C\to C_G.
\]
It remains to show that \eqref{eq:reduction_map} is an open embedding (even for $G'$ with direct $\operatorname{Sp}(n)$ factors). Let $Z'\in {\widetilde{\bbmu}}^{-1}(0)=\WH(\check T'_\C\times \lt_\C)$, and $Z\in \WH(C_T)$ be its image. We have
\begin{align*}
    T_{Z'}\WH(\check T'_\C\times \lt_\C)&=H^0(Z',N_{Z'/\check T'_\C\times \lt})^w\\
    T_{Z}\WH(C_T)&=H^0(Z,N_{Z/C_T})^w.
\end{align*}
It follows from the exact sequence
\[
0\to \check {\mathfrak{q}}_\C\to H^0(Z',N_{Z'/\check T'_\C\times \lt})^w\to H^0(Z,N_{Z/C_T})^W\to 0.
\]
induced by the $\check Q_\C$-action that \eqref{eq:reduction_map} is \'etale. And since \eqref{eq:reduction_map} is birational and $\WH(C_T)$ is smooth by \Cref{lem:smoothness_WHilb}, it is an open embedding by Zariski's Main theorem.
\end{proof}

\begin{cor}\label{cor:image_of_jmath}
Let
\[
U_T=\bigcup_{\alpha\in \Phi} \{(t,a)\in C_T: \alpha(a)=0,\ \alpha^\vee(t)\neq 1\},
\]
and let $U'_T\subset \WH(C_T)$ be the preimage of $U_T/W$ under the morphism $\WH(C_T)\to C_T/W$. Then
\[
C_G=\WH(C_T)\setminus U'_T.
\]
\end{cor}

\begin{proof}
We have seen in the proof of \Cref{lem:smoothness_WHilb} that if $Z\in \WH(C_T)$ contains a point $(t,a)\in C_T$, then $\operatorname{Stab}_W(t)\supset \operatorname{Stab}_W(a)$. In particular, if $\alpha(a)=0$, then $s_\alpha(t)=t$.

First suppose that $G$ does not contain any direct $\operatorname{Sp}(n)$ factor. We claim that $s_\alpha(t)=t$ implies $\alpha^\vee(t)=1$, which proves $U'_T=\varnothing$. Indeed, by \Cref{Liegroup_lemma}, we can choose a cocharacter $\lambda$ of $T$ such that $\langle \alpha,\lambda\rangle=1$. Regarding $\lambda$ as a character of $\check T_\C$, if $s_\alpha(t)=t$, then
\[
\lambda(t)=\lambda(s_\alpha(t))=s_\alpha(\lambda)(t)=\alpha^\vee(t)^{-1}\lambda(t),
\]
and hence $\alpha^\vee(t)=1$. 

In general, choose a short exact sequence
\[
1\to G\to G'\to Q\to 1
\]
as in \eqref{ses_compact} so that $G'$ does not contain any direct $\operatorname{Sp}(n)$ factor. The result for $G'$ implies that $U'_{T}$ is disjoint from the image of $\WH(C_{T'})//\check Q_\C\to \WH(C_T)$.

Conversely, suppose that $Z\in \WH(C_T)$ has support disjoint from $U_T$. We can write
\[
Z=\bigsqcup_{\sigma\in W/H}\sigma(Z_x),
\]
where $x=(t,a)\in C_T$, $H=\operatorname{Stab}_W(x)$, and $Z_x\in H\text{-}\operatorname{Hilb}_{\lt_\C}(C_T)$ is supported at $x$. Let $t'\in \check T'_\C$ be a lift of $t$. We now show that
\[
\operatorname{Stab}_W(t')\supset H.
\]
Recall $H=\operatorname{Stab}_W(a)$ is generated by reflections. Let $s_\alpha\in H$. Since $s_\alpha$ fixes $x=(t,a)$, it fixes $a$, and hence $\alpha(a)=0$. Since the support of $Z$ is disjoint from $U_T$, we have $\alpha^\vee(t)=1$. Therefore $\alpha^\vee(t')=1$, and hence $s_\alpha(t')=t'$. Thus every reflection generator of $H$ fixes $t'$, so $\operatorname{Stab}_W(t')\supset H$.

Let $x'=(t',a)$. Then $\operatorname{Stab}_W(x)=\operatorname{Stab}_W(x')$, and it suffices to lift $Z_x$ to a transverse $H$-subscheme $Z_{x'}$ of $C_{T'}$ supported at $x'\in\bbmu^{-1}(0)$, because $Z_{x'}$ then extends to a transverse $W$-subscheme of $\bbmu^{-1}(0)$ lifting $Z$.

Note that there is an isomorphism
\[
\hat\oh_{x'}\cong\hat\oh_x[[\check{\mathfrak q}_\C]].
\]
If $Z_x$ is defined by an ideal $I\subset\oh_x$, then, under the above isomorphism, we may define $Z_{x'}$ by the ideal $I'\subset\oh_{x'}$ generated by $I$ and $\check{\mathfrak q}_\C$. This completes the proof.
\end{proof}

\section{Nonabelian lifting}\label{section:nonabelian_lifting}
As mentioned in the introduction, 3d mirror symmetry predicts that for each Hamiltonian $G$-manifold $Y$, there is a mirror complex Lagrangian subvariety $\bbL^G_Y$ in $C_G$. We refer to \cite{Paper1} for further discussion of this subject.

\subsection*{Abelian case}
We now review the construction of $\bbL^T_Y$ as proposed by Teleman \cite{tel2014}. Recall $C_T$ is just $T^*\check T_\C$. 

Suppose $Y$ is a symplectic manifold and $(Y^\vee,f)$ is its 2d mirror Landau--Ginzburg model. Teleman conjectured that a Hamiltonian $T$-action on $Y$ is mirror to a holomorphic map $\fz=\fz_T:Y^\vee\to \check T_\C$. In the next section, we will define $\fz$ on the set of Lagrangian branes with respect to a $B$-field. In this section, we will simply assume $\fz$ exists.

Let $\Gamma_{\fz_T}\subset Y^\vee\times \check T_\C$ be the graph of $\fz_T$ and
\[
N^*_{\Gamma_{\fz_T}}\subset T^*(Y^\vee\times \check T_\C)\cong T^*Y^\vee\times T^*\check T_\C
\]
be its conormal. Let $\Gamma_{df}\subset T^*Y^\vee$ be the graph of $df$. We define 
\[
\bbL^T_Y=N^*_{\Gamma_{\fz_T}}\circ \Gamma_{df}\subset T^*\check T_\C
\]
to be the Lagrangian composition. Explicitly, we can identify $C_T$ with $\check T_\C\times \lt_\C$ and set
\begin{equation}\label{eq:ZTY}
    Z^T_Y=\{(p,z,a)\in Y^\vee\times \check T_\C\times\lt_\C:z=\fz_T(p),df|_p=\fz_T^* da|_p\}.
\end{equation}
Here $da$ is regarded as a left-invariant differential form on $\check T_\C$ whose value at the identity is $a$. Then $\bbL^T_Y$ is the image of $Z^T_Y$ under the natural projection $\check Y\times \check T_\C\times\lt_\C\to \check T_\C\times\lt_\C$. 

If $\bbL_Y^T$ is reduced, then its smooth locus is Lagrangian, and we will refer to $\bbL_Y^T$ as Lagrangian by abuse of notation. In general, the Lagrangian composition above should be understood in the derived sense; it then defines a $0$-shifted Lagrangian in $T^*\check T_\C$; see \cite{PTVV}. For simplicity, we henceforth assume that $\bbL^T_Y$ is reduced. Replacing $\bbL^T_Y$ by its closure if necessary, we may assume that $\bbL^T_Y$ is a closed subscheme of $C_T$.

\begin{exmp}\label{exam1}
 	Let $Y=\mathbb{P}^1$ with the standard toric $T=\operatorname{U}(1)$-action. The Hori-Vafa mirror of $Y$ is $(Y^\vee=\Cx, f=z+1/z)$, and the Teleman map $\fz_T:\Cx\to \Cx=\check T_\C$ is the identity map. In this case, $\bbL^T_Y$ is simply the graph of $df$. If $z$ and $a$ are the base and fiber coordinates of $T^*\check T_\C=T^*\Cx$, then $\bbL^T_Y$ is defined by the equation
 	\[a=z-\frac{1}{z}.\]
 \end{exmp}
 
  \begin{exmp}\label{exam2}
 	We still consider $Y=\mathbb{P}^1$, but assume $T=S^1$ is the double cover of the $S^1$ in \Cref{exam1}. In other words, $\{\pm 1\}\subset S^1$ lies in the kernel of the action. We still have $(Y^\vee,f)=(\Cx,w+1/w)$, but $\fz:Y^\vee\to \check T_\C=\Cx$ is the square map $w\mapsto z=w^2$. $\bbL^T_Y$ is defined by the equation
 	\[4a^2=z+\frac{1}{z}-2.\]
 \end{exmp}

Now suppose that the $T$-action on $Y$ extends to a Hamiltonian $G$-action. We propose that $\bbL^G_Y\subset C_G$ can be obtained from $\bbL^T_Y\subset C_T$ by applying the transverse $W$-Hilbert scheme functor. More precisely, we conjecture that
\begin{conjj}\label{conj}
Suppose that the $T$-action on $Y$ extends to a Hamiltonian $G$-action. Then:
\begin{enumerate}[(i)]
\item $\bbL^T_Y\subset C_T$ is $W$-invariant;
\item the natural morphism $\WH(\bbL^T_Y)\to \bbL^T_Y/W$ is an isomorphism;
\item the morphism $\WH(\bbL^T_Y)\to \WH(C_T)$ factors through the image of $C_G\to \WH(C_T)$.
\end{enumerate}
\end{conjj}
We refer the reader to \Cref{section:introduction} for some comments on \Cref{conj}. 
\begin{rem}
The diagram \eqref{Couldiag1} induces a Lagrangian correspondence
\[
\spec H^T_\bullet(\Omega G)
\subset
\spec H^T_\bullet(\Omega T)\times \spec H^G_\bullet(\Omega G).
\]
If \Cref{conj} holds, then
\[
\bbL^G_Y=\bbL^T_Y/W\subset C_G
\]
can be understood as (union of components of) the Lagrangian composition
\[
\spec H^T_\bullet(\Omega G)\circ \bbL^T_Y.
\]
Consequently, $\bbL^G_Y$ is Lagrangian in $C_G$ in either of the following senses: classically, provided that $\bbL^G_Y$ is reduced; or derivedly, by interpreting the above composition as a derived Lagrangian composition in the sense of shifted symplectic geometry \cite{PTVV}. The purpose of \Cref{conj} is to ensure that no information is lost
under this Lagrangian correspondence.
\end{rem}
 
\begin{exmp}
The $S^1$-action on $Y=\mathbb{P}^1$ in \Cref{exam1} naturally extends to a $G=\operatorname{SO}(3)$-action. We have
\[
\C[\bbL^T_Y]
=
\C[a,z^{\pm 1}]/(z-z^{-1}-a)
=
\C[z+z^{-1}]\oplus \C[z+z^{-1}]a.
\]
Hence $\C[\bbL^T_Y]$ is an $\NH$-algebra by \Cref{GinLemma}, and therefore $\WH(\bbL^T_Y)=\bbL^T_Y/W$. Note that $C_G=\WH(C_T)$ in this case.
\end{exmp}

\begin{exmp}
The $S^1$-action on $Y=\mathbb{P}^1$ in \Cref{exam2} naturally extends to a $G=\operatorname{SU}(2)$-action. Similarly, one can check that $\C[\bbL^T_Y]$ is an $\NH$-algebra. In this case, $\C[C_G]$ is obtained from $\C[\WH(C_T)]$ by adjoining
\[
g=\frac{z+z^{-1}-2}{a^2}.
\]
The restriction of $g$ to $\bbL^T_Y=\{z+z^{-1}-2=4a^2\}$ is regular, and is in fact equal to the constant function $4$. Therefore, $\WH(\bbL^T_Y)\subset C_G$.
\end{exmp}

\begin{lem}\label{lem:criterion_bbLG1}
Let $\bbL\subset C_T$ be a $W$-invariant closed reduced subscheme.
\begin{enumerate}
\item The natural morphism $\WH(\bbL)\to \bbL/W$ is an isomorphism if and only if $\C[\bbL]$ is an $\NH$-algebra.
\item Suppose furthermore that, for any short exact sequence
\[
1\to G\to G'\to Q\to 1
\]
as in \eqref{ses_compact}, there exists a $W$-invariant closed subscheme $\bbL'\subset C_{T'}$ such that $\WHH(\bbL')=\bbL'/W$ and $\bbL$ is equal to the image of 
\[
\bbL'\cap \bbmu^{-1}(0)\to \bbmu^{-1}(0)/\check Q_\C\cong C_T,
\]
then $\WH(\bbL)\subset C_G$.
\end{enumerate}
\end{lem}
\begin{proof}
    (1): Assume the natural morphism $\WH(\bbL)\to \bbL/W$ is an isomorphism, and let $\mathcal{Z}\subset \bbL/W\times \bbL$ be the universal family. By \eqref{eq:Z_in_fibre_product}, we have
    \[
    \mathcal{Z}\subset \bbL\times_{\bbL/W}\bbL/W=\bbL.
    \]
    On the other hand, note that both $\mathcal{Z}\to \bbL/W$ and $\bbL\to\bbL/W$ are finite and surjective. The degree of the former is $\dim R=|W|$, while the degree of the latter is at most $|W|$. Since $\C[\bbL]$ is reduced and $\C[\bbL]\to \C[Z]$ is surjective, the only possibility is $\mathcal{Z}=\bbL$, and in particular $\C[\bbL]$ is an $\NH$-algebra. The converse is obvious.

    (2): The assumption implies that $\WH(\bbL)$ is equal to the image of 
    \[
    \WHH(\bbL')\cap {\widetilde\bbmu}^{-1}(0)\to {\widetilde\bbmu}^{-1}(0)/\check Q_\C\to \WH(C_T).
    \]
    The result then follows from \Cref{thm:Coulombbranch=Whilb}.
\end{proof}

\begin{exmp}
Let $X_G$ be the full flag variety of $G_\C$. \citeauthor{MR2397456} constructed a mirror $(X^\vee_G,W_G)$ of $X_G$ in \cite{MR2397456}. We refer the reader to loc. cit. for more details. The following properties can be checked from the construction:
\begin{enumerate}
\item The results of \cite{MR2397456} show that $\C[\bbL^T_{X_G}]$ is isomorphic to the equivariant quantum cohomology of $X_G$\footnote{The author of \cite{MR2397456} assumes that $G$ is simply connected. However, the construction also works for any connected compact group $G$, and the quantum-cohomological statements remain true if one uses the equivariant Novikov parameters introduced in \cite{chan2025quantumcohomologyshiftoperators}.}. It follows from \Cref{exmp:NH_action_on_HG} that this ring is an $\NH$-algebra.
\item Given a short exact sequence as in~\eqref{ses_compact}, there is a $\check Q_\C$-action on $X_{G'}^\vee$ such that $X_{G'}^\vee/\check Q_\C\cong X_ G^\vee$, and $W_{G'}$ is the pullback of $W_G$ via the quotient map. One can check that $\bbL^{T'}_{X_{G'}}$ satisfies condition (2) of \Cref{lem:criterion_bbLG1}.
\end{enumerate}
\end{exmp}
For each root $\alpha$ of $(G,T)$, it is easy to see that
\[
\bbL_Y^{T^{s_\alpha}}=\bbL^T_Y\cap V(\alpha),
\]
where $V(\alpha)\subset T^*\check T_\C$ is the closed subscheme defined by the function $\alpha$.

\begin{lem}\label{lem:criterion_bbLG2}
Let $\bbL\subset C_T$ be a $W$-invariant closed Lagrangian subvariety. Suppose
\[
\bbL\cap V(\alpha_i)
\]
is reduced for every $i$. Then:
\begin{enumerate}[(i)]
    \item The natural morphism $\WH(\bbL)\to \bbL/W$ is an isomorphism if and only if, for every root $\alpha$ and every $(t,a)\in\bbL$ satisfying $\alpha(a)=0$, one has $ s_\alpha(t)=t$.

    \item One has $\WH(\bbL)\xrightarrow{\cong}\bbL/W\subset C_G$ if and only if, for every root $\alpha$ and every $(t,a)\in\bbL$ satisfying $\alpha(a)=0$, one has $\alpha^\vee(t)=1$.
\end{enumerate}
\end{lem}
\begin{rem}\label{rem:remove_reducedness_assumption}
Without assuming that $\bbL\cap V(\alpha_i)$ is reduced, \Cref{lem:criterion_bbLG2} remains valid if the pointwise conditions in \textup{(i)} and \textup{(ii)} are replaced, respectively, by the scheme-theoretic inclusions
\[
\bbL\cap V(\alpha_i)\subset \bbL^{s_i}
\qquad\text{and}\qquad
\bbL\cap V(\alpha_i)\subset V(\alpha_i^\vee-1),
\]
where $\bbL^{s_i}$ denotes the scheme-theoretic fixed-point subscheme of $s_i$; see \cite{FixedPointSchemes}.
\end{rem}

\begin{proof}[Proof of \Cref{lem:criterion_bbLG2}]
(i): First suppose that $\C[\bbL]$ is an $\NH$-algebra, and let $(t,a)\in \bbL$ satisfy $\alpha_i(a)=0$. Let $z^\lambda:\check T_\C\to \Cx$ be a character. Then
\[
z^\lambda(t)-z^\lambda(s_i(t))
=
\alpha_i(a)D_i(z^\lambda)(t,a)
=
0.
\]
Since characters separate points of $\check T_\C$, it follows that $s_i(t)=t$.

Conversely, suppose that the conclusion of (i) is satisfied. We want to construct an action of $D_i$ on $\C[\bbL]$ for each $i$. For this purpose, we may replace $G$ by the subgroup whose
complexification is generated by $T_\C$ and the root subgroups
corresponding to $\pm\alpha_i$, and hence assume that $G$ has
semisimple rank one.

By \Cref{GinLemma}, it suffices to show that
\[
\varphi:\C[\bbL]^{s_{i}}\oplus \C[\bbL]^{s_{i}}\xrightarrow{(1,\alpha_i)} \C[\bbL]
\]
is an isomorphism. Since $s_i(\alpha_i)=-\alpha_i$, we have
\[
\C[\bbL]^{s_{i}}\cap\C[\bbL]^{s_{i}}\alpha_i=0.
\]
To show the injectivity of $\varphi$, it suffices to check that $\alpha_i$ is not a zero divisor. Since $\bbL$ is reduced, this is equivalent to saying that $\alpha_i$ is not identically zero on any irreducible component of $\bbL$. Suppose, for contradiction, that $\alpha_i$ vanishes identically on an irreducible component $Z\subset \bbL$. Then the assumption implies that $Z$ is contained in the fixed locus
\[
(\check T_\C\times \lt_\C)^{s_i}=\check T_\C^{s_i}\times \lt_\C^{s_i}.
\]
This fixed locus is a symplectic subvariety of codimension $2$ in $C_T$. Hence no irreducible component of dimension $\dim \bbL$ contained in it can be a Lagrangian in $C_T$, a contradiction.

It remains to show that $g\in \operatorname{Im}(\varphi)$ for any $g\in \C[\bbL]$. Since $\operatorname{Im}(\varphi)$ is an algebra, we only need to check this when $g\in \lt^*$ or when $g=z^\lambda$ for some cocharacter $\lambda$ of $T$. The first case is clear since
\[
\lt^*=(\lt^*)^{s_i}\oplus \C\alpha_i.
\]
For the second case, since
\[
z^\lambda=\frac{1}{2}(z^\lambda+z^{s_i\lambda})+\frac{1}{2}(z^\lambda-z^{s_i\lambda}),
\]
it suffices to show that $z^\lambda-z^{s_i\lambda}\in \operatorname{Im}(\varphi)$. Finally, since $\bbL\cap V(\alpha_i)$ is reduced, the assumption implies that $z^\lambda-z^{s_i\lambda}$ is divisible by $\alpha_i$ in $\C[\bbL]$, and hence $z^\lambda-z^{s_i\lambda}\in \operatorname{Im}(\varphi)$ as desired.

(ii): It is a direct consequence of \Cref{cor:image_of_jmath}.
\end{proof}
Recall the definition of $Z^T_Y$ in \eqref{eq:ZTY}. By \Cref{lem:criterion_bbLG2}, the condition that $\WH(\bbL^T_Y)=\bbL^T_Y/W\subset C_G$ is equivalent to the following conjecture.

\begin{conjj}\label{conj2}
For every root $\alpha$ and every $(p,z,a)\in Z^T_Y$ satisfying $\alpha(a)=0$, one has $\alpha^\vee(\fz(p))=\alpha^\vee(z)=1$.
\end{conjj}
In the remainder of the paper, we formulate a more precise version of \Cref{conj2} using Lagrangian Floer theory and prove it.
\section{The Teleman map and superpotential with an equivariant \texorpdfstring{$B$}{B}-field}\label{section:Teleman_and_f}
We let $Y$ be a Hamiltonian $G$-space. We write $\omega$ and $\mu$ for the symplectic form and moment map on $Y$.
\begin{df}
    An equivariant $B$-field on $Y$ is a $G$-equivariantly closed
    two-form $\tilde{B}=B-\kappa$. In other words, $B$ is a $G$-invariant closed two-form, and $\kappa:Y\to \lig^*$ is a $G$-equivariant map so that $\iota_{\xi^\sharp}B=d\langle \kappa,\xi\rangle$ for all $\xi\in \lig$.
\end{df}
If $H$ is a subgroup of $G$, we write $\mu_H$ and $\kappa_H$ for the restrictions of $\mu$ and $\kappa$ to $\operatorname{Lie}(H)^*$.

\begin{df}
    A $T$-invariant Lagrangian brane on $Y$ is a pair $\bL=(L,E)$, where $L\subset Y$ is a compact connected $T$-invariant Lagrangian submanifold, and $E$ is a $T$-equivariant unitary line bundle on $L$ equipped with a $T$-invariant unitary connection $\nabla_E$ having curvature $F_E=iB|_L$.
\end{df}

Intuitively, the Teleman map is defined as a combination of the moment map and monodromy. However, we have to be careful about what we mean by monodromy when the bundles under consideration are not flat.

Let $\bL=(L,E)$ be a $T$-invariant Lagrangian brane on $Y$, and let $p\in L$. Consider the orbit map
\begin{equation}\label{eq:orbit_map}
    o=o_p:T\to L,\qquad t\mapsto tp.
\end{equation}
The pullback line bundle $o_p^*(E)$ is flat: indeed, its curvature is exact and $T$-invariant, and hence must vanish. Its monodromy determines an element
\[
\hol^p_T(E)\in \Hom(\pi_1(T),\operatorname{U}(1))\cong \check T.
\]
Note that the Lie algebra of $\check T$ is canonically identified with $\Hom(\pi_1(T),i\R)\cong i\lt^*$. We will also identify $\Lambda$ with $\pi_1(T)$ and with the kernel of the exponential map $\exp:\lt\to T$. If $\lambda:S^1\to T$ is a cocharacter, we write
\[
\hol^p_\lambda(E)=\hol^p_T(E)(\lambda).
\]
\begin{lem}\label{lem:indep_of_p}
The value $\exp(-i\kappa_T(p))\hol^p_T(E)\in \check T$ is independent of the point $p\in L$. Moreover, if $\bL$ is $T$-equivariantly Hamiltonian isotopic to $\bL'=(L',E')$, and $p'\in L'$, then $\exp(-i\kappa_T(p))\hol^p_T(E)=\exp(-i\kappa_T(p'))\hol^{p'}_T(E')$.
\end{lem}
\begin{proof}
Let $p,q\in L$, and choose a smooth path
$c:[0,1]\to L$ from $p$ to $q$. Let
$\lambda:S^1\to T$ be a cocharacter, and choose $\xi\in\mathfrak t$
such that
\[
\exp(\theta\xi)=\lambda(e^{2\pi i\theta})
\]
for every $\theta\in\mathbb R$. Define
\[
\Psi:S^1\times[0,1]\longrightarrow L,
\qquad
\Psi(z,s)=\lambda(z)\cdot c(s).
\]
Then
\begin{align*}
    \hol^p_\lambda(E)(\hol^q_\lambda(E))^{-1}
    &=\exp\left(-\int_{S^1\times [0,1]} i \Psi^*B\right)\\
    &=\exp\left(-\int_0^1 i\langle d(\kappa_T\circ c),\xi\rangle\right)\\
    &=\exp(-i\langle \kappa_T(q),\xi\rangle+i\langle \kappa_T(p),\xi\rangle)\\
    &=\exp(-i\kappa_T(q))(\lambda)\left(\exp(-i\kappa_T(p))(\lambda)\right)^{-1}.
\end{align*}
    Therefore, $\exp(-i\kappa_T(p))\hol^p_T(E)=\exp(-i\kappa_T(q))\hol^q_T(E)$ as claimed. Since this equality holds after evaluation on every cocharacter of
    $T$, we conclude that
    \[
    \exp\bigl(-i\kappa_T(p)\bigr)\operatorname{Hol}_T^p(E)
    =
    \exp\bigl(-i\kappa_T(q)\bigr)\operatorname{Hol}_T^q(E)
    \]
as elements of $\check T$. The second part of the lemma can be proved by the same method.
\end{proof}
\begin{df}
    Let $\bL=(L,E)$ be a $T$-invariant Lagrangian brane on $Y$, and let $p\in L$. We define
    \[
    \HH_T(E)= \exp(-i\kappa_T(p))\hol^p_T(E)\in \check T,
    \]
    and
    \[
     \fz_T(\bL)=\exp(-\mu_T(L))\cdot\HH_T(E)\in \check T_\C\cong \exp(\lt^*)\cdot\check T.
    \]
    Here, $\mu_T|_L$ is constant because $L$ is $T$-invariant and connected, and we denote its value by $\mu_T(L)$. We call $\HH_T(E)$ the equivariant monodromy of $E$ and $\fz_T$ the Teleman map.
\end{df}
By \Cref{lem:indep_of_p}, $\HH_T(E)$ and $\fz_T(\bL)$ are independent of the point $p\in L$. If $\alpha:S^1\to T$ is a cocharacter, we write $\HH_\alpha(E)=\HH_T(E)(\alpha)$. 
\begin{rem}
Suppose that $L$ is a torus fiber of an SYZ fibration. Then $\bL$ can be interpreted as a point of the 2d mirror $Y^\vee$. We have
\[
T_{\bL}Y^\vee
\cong H^1(L;\R)\oplus iH^1(L;\R)
\cong H^1(L;\C).
\]
Choose a Riemannian metric on $L$ and identify $H^1(L;\R)$ with the space of harmonic forms. For $a,b\in H^1(L;\R)$ and $\xi\in\lt$, one has
\[
d\log\fz_T(a+ib)(\xi)
=
i\,a(\xi^\sharp)-b(\xi^\sharp),
\]
see, for example, \cite[Proposition~2.10]{Paper1}. Thus $d\log\fz_T$ is complex linear, and consequently $\tau_T$ is
holomorphic.
\end{rem}

Next we discuss the superpotential. Fix a $T$-invariant Lagrangian brane $\bL=(L,E)$ on $Y$. We assume that $L$ is equipped with a $T$-invariant relative spin
structure; see \cite{FOOOII,KLZ}. We work over the universal Novikov field
\begin{equation}\label{NovField}
    \F=
    \left\{
    \sum_{i=0}^\infty a_i\T^{\lambda_i}
    :
    a_i\in\C,\ 
    \lambda_i\in \R,\ 
    \lambda_i<\lambda_{i+1},\ 
    \lim_{i\to\infty}\lambda_i=\infty
    \right\}.
\end{equation}
We write
\begin{align}\label{m0}
    f(\bL)=f_Y(\bL)
    &=\sum_{\beta}n_\beta \exp\left(i\int_\beta B\right)\hol_{\partial \beta}(E)\T^{\int_\beta\omega}\in \F,\\
    df(\bL)=df_Y(\bL)
    &=\sum_{\beta}n_\beta \exp\left(i\int_\beta B\right) \hol_{\partial \beta}(E)\T^{\int_\beta\omega}[\partial \beta]\in H_1(L;\F).
\end{align}
where the sums run over the Maslov-index-two disc classes $\beta$ in $\pi_2(Y,L)$, $\T$ is the Novikov parameter, and $n_\beta$ is the virtual count of holomorphic discs representing
the class $\beta$ (see \cite{AurouxComp,FOOOI}). We remark that the expression
\[
\exp\left(i\int_\beta B\right)\hol_{\partial\beta}(E)
\]
depends only on the relative homotopy class $\beta\in\pi_2(Y,L)$
and not on the choice of representative. This follows from an
argument similar to that used in Lemma~6.3; see
\cite[Lemma~2.7]{FukB}.

\section{Main theorems}\label{section:main_theorems}
For the remainder of this paper, we fix the following data.
\begin{itemize}
    \item $Y$ is a symplectic manifold equipped with an $\omega$-compatible almost complex structure $J$ and a Hamiltonian $G$-action preserving $J$. We denote the moment map by
    \[
        \mu_G:Y\to\lig^*.
    \]
    If $Y$ is noncompact, we assume that it is either convex at infinity or geometrically bounded.
    \item $\tilde B=B-\kappa$ is a $G$-equivariant B-field on $Y$.
    
    \item $\bL=(L,E)$ is a $T$-invariant Lagrangian brane on
    $(Y,\widetilde B)$, and $L$ is equipped with a $T$-invariant
    relative spin structure; see \cite{FOOOII,KLZ}. We assume $E$ is topologically trivial.
\end{itemize}
We review the relevant aspects of Lagrangian Floer theory in \Cref{appendix:Floer_theory}. \Cref{thm1,thm2} below are the main theorems in the second half of this paper; their proofs will be given in the next section. We denote by $Z(H)$ the center of a group $H$.
\begin{thm}\label{thm1}
If $HF^\bullet(\bL,\bL)\neq 0$, then $\fz_T(\bL)\in Z(\check G_\C)$.
\end{thm}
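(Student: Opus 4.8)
\emph{Strategy.} The plan is to detect centrality of $\fz_T(\bL)$ through the Weyl symmetry of the problem, reducing everything to two rank-one models. Recall that an element $t\in T^\vee_\C$ lies in $Z(G^\vee_\C)$ if and only if $\alpha(t)=1$ for every root $\alpha\in\Phi(G^\vee,T^\vee)$, and that each reflection $s_\alpha\in W$ lifts to a representative in $N(T)\subset G$. For $G=\operatorname{U}(2)$ the $\sigma$-fixed locus of $T^\vee_\C$ (with $\sigma$ the nontrivial Weyl element) is \emph{exactly} the center, so in that case the theorem reduces to showing that $\fz_T(\bL)$ is $\sigma$-fixed, i.e. that both $\mu_T(L)\in\lt^*$ and $\HH_T(E)\in T^\vee$ are $\sigma$-invariant.

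\emph{The $\operatorname{U}(2)$ case.} I would first push $\bL$ forward by a lift $\sigma\in N(T)$ to obtain a second $T$-invariant brane $\sigma(\bL)=(\sigma L,\sigma E)$. Since $G$ is connected, $\sigma$ acts on $(Y,\omega)$ by a symplectomorphism that is Hamiltonian isotopic to the identity, so invariance of Lagrangian Floer cohomology under Hamiltonian isotopy upgrades the hypothesis to $HF^\bullet(\bL,\sigma(\bL))\cong HF^\bullet(\bL,\bL)\neq0$. Running the energy-filtration spectral sequence whose abutment is $HF^\bullet(\bL,\sigma(\bL))$ and whose initial page is the classical cohomology $H^\bullet(L\cap\sigma L,\Hom(E,\sigma E)\otimes\Theta)$, with $\Theta$ the local system coming from the relatively spin structures, the nonvanishing of the abutment forces this classical cohomology to be nonzero on some component $C\subseteq L\cap\sigma L$, so in particular $L\cap\sigma L\neq\varnothing$. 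The two invariances are then read off along $T$-orbits: as $\sigma$ normalizes $T$, both $L$ and $\sigma L$ are $T$-invariant, hence so is $C$, and through any $y\in C$ the orbit $T\cdot y$ lies in both Lagrangians. Equivariance $\mu_T(\sigma L)=\sigma\cdot\mu_T(L)$ together with constancy of $\mu_T$ on each Lagrangian forces $\mu_T(L)=\sigma\cdot\mu_T(L)$ from the single point $y$, and triviality of $\Hom(E,\sigma E)\otimes\Theta$ along $T\cdot y$ gives $\HH_T(\sigma E)=\HH_T(E)$, while $\sigma\in N(T)$ gives $\HH_T(\sigma E)=\sigma\cdot\HH_T(E)$; thus $\HH_T(E)$ is $\sigma$-fixed and $\fz_T(\bL)$ is central.

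\emph{From $\operatorname{SU}(2)$ to general $G$.} The identical argument for $G=\operatorname{SU}(2)$ only shows that $\fz_T(\bL)$ is fixed by $z\mapsto z^{-1}$ on $T^\vee_\C=\Cx$, i.e. $\fz_T(\bL)^2\in Z(G^\vee_\C)$, since for the dual group $\operatorname{PGL}(2,\C)$ the fixed locus is strictly larger than the center. To close this gap I would enlarge the symmetry: set $\widetilde Y=(Y\times T^*S^1)/\Z_2$, where $S^1$ acts on $T^*S^1$ by rotation and $\Z_2$ is the diagonal of $Z(\operatorname{SU}(2))$ with the order-two subgroup of $S^1$, so that the $\operatorname{SU}(2)\times S^1$-action descends to a genuine $\operatorname{U}(2)$-action; applying the already-proved $\operatorname{U}(2)$ case to $\widetilde Y$ and tracing $\fz$ back through the quotient pins $\fz_T(\bL)$ to the center rather than merely to $\{\pm1\}$. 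For general $G$ the statement is attacked one reflection at a time: the Floer argument above (with $\sigma$ a representative of $s_\alpha$) shows that $\fz_T(\bL)$ is $W$-invariant, and for each root the subgroup generated by $T$ and the root groups of $\pm\alpha$ presents either a $\operatorname{U}(2)$-type problem, where $W$-invariance already gives $\alpha(\fz_T(\bL))=1$, or an $\operatorname{SU}(2)$-type problem, where the doubling trick upgrades it, so that $\alpha(\fz_T(\bL))=1$ for all $\alpha$ and $\fz_T(\bL)\in Z(G^\vee_\C)$.

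\emph{Main obstacle.} I expect the crux to be the passage from Floer-cohomological nonvanishing to the concrete holonomy constraint: one must control the spectral sequence over the Novikov field in the presence of weak unobstructedness and of a possibly non-transverse (clean) intersection $L\cap\sigma L$, correctly carry the local system $\Theta$ attached to the relatively spin structure, and verify that its triviality restricted to $T$-orbits is precisely the matching of equivariant holonomies $\HH_T$. The secondary difficulty is the $\operatorname{SU}(2)$ doubling construction, where one must check that the B-field induced on the $\Z_2$-quotient remains within the standing hypotheses and that $\fz$ transforms as claimed under the construction.
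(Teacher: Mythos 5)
Your outline reproduces the paper's proof strategy step for step: the $\operatorname{U}(2)$ case via a Weyl representative $\sigma\in N(T)$ and the invariance $HF^\bullet(\bL,\sigma(\bL))\cong HF^\bullet(\bL,\bL)\neq 0$, the spectral-sequence descent to classical cohomology twisted by $\Theta$, holonomy matching along orbits, the $\Z_2$-quotient of $Y\times T^*S^1$ for $\operatorname{SU}(2)$, and rank-one reduction for general $G$. However, two steps you defer are genuine gaps rather than technicalities, and they are precisely where the paper does real work. First, you run the comparison spectral sequence directly on $L\cap\sigma(L)$. The inequality (\ref{inequality}) presupposes that this intersection is clean: the Floer complex is built as a direct sum over connected components of the intersection, so without cleanness neither the complex nor its classical page is defined. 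Moreover, a generic Hamiltonian perturbation achieving transversality would destroy your argument, because your orbit-holonomy step needs the components to be $T_1$-invariant (circles), so that the fact that a nontrivial rank-one local system on $S^1$ has vanishing cohomology applies, and so that the conclusion concerns equivariant monodromies at all. The paper resolves this with a dichotomy plus \Cref{LemClean}: if $T_1$ has a fixed point, the holonomy identity is immediate; if $T_1$ acts locally freely, there exists a $T_1$-invariant Hamiltonian perturbation $\bL'$ of $\sigma(\bL)$ meeting $L$ cleanly in finitely many $T_1$-orbits, and the invariance of $\HH_{T_1}$ under $T_1$-Hamiltonian equivalence (the unlabeled lemma opening \Cref{theorems}) transports the conclusion from $E'$ back to $\sigma(E)$. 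This equivariant transversality statement is proved separately in \Cref{Prooflemma} and is not routine: equivariantly one can only force $\dim T_pL\cap T_pL'\leq 1$, which is then promoted to cleanness using $T_1$-invariance of both Lagrangians.

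Second, in the doubling step you ``apply the already-proved $\operatorname{U}(2)$ case to $\widetilde Y$,'' but the hypothesis of the theorem on $\widetilde Y$ --- nonvanishing of $HF^\bullet$ of the descended brane $\bL''$ --- is never verified; checking that the B-field and $\fz$ descend correctly (which is what you flag) is not sufficient. The paper needs the K\"unneth identification $HF^\bullet(\bL',\bL')\cong HF^\bullet(\bL,\bL)\otimes H^\bullet(S^1,\Lambda)$ together with \Cref{Lemfinitequotient}, which proves $HF^\bullet(\bL',\bL')\cong HF^\bullet(\bL'',\bL'')$ by showing that the holomorphic disc moduli for $L'=L\times S^1$ form a double cover of those for $L''=L'/\Z_2$ with equal areas and Maslov indices, so the two Floer differentials agree up to a factor of $2$, harmless over $\Lambda$. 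Without this transfer of nonvanishing through the free $\Z_2$-quotient, the reduction of $\operatorname{SU}(2)$ to $\operatorname{U}(2)$ does not close, and hence neither does the general case, which the paper funnels entirely through $\operatorname{SU}(2)$ via the coroot homomorphisms $\rho:\operatorname{SU}(2)\to G$.
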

Here, $\check G_\C$ is the Langlands dual group of $G_\C$. It is characterized, up to isomorphism, by the condition that $\check T_\C$ is a maximal torus of $\check G_\C$ and that the root datum of $\check G_\C$ is dual to the root datum of $G_\C$. In particular, if $\alpha\in \Phi=\Phi(G_\C,T_\C)$, then the corresponding coroot $\alpha^\vee$ is a root of $\check G_\C$ with respect to $\check T_\C$. Note that
\begin{equation}\label{eq:Z(G)=int_ker}
    Z(\check G_\C)=\bigcap_{\alpha\in \Phi}\ker(\alpha^\vee)\subset \check T_\C.
\end{equation}

If the minimal Maslov number of $L$ is at least $2$, and $H^\bullet(L)$ is generated in degree 1, then \Cref{H1gen} in \Cref{appendix:Floer_theory} implies that $df(\bL)=0$ implies that $HF^\bullet(\bL,\bL)=H^\bullet(L;\F)\neq 0$. We thus obtain the following corollary of \Cref{thm1}. 
\begin{cor}\label{cor1} Assume the minimal Maslov number of $L$ is at least $2$, and $H^\bullet(L)$ is generated in degree 1. If $df(\bL)=0$, then $\fz_T(\bL)\in Z(\check G _\C)$.
\end{cor}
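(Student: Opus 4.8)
The plan is to deduce the corollary formally from \Cref{thm1}, using the two standing hypotheses to upgrade the condition $df(\bL)=0$ into the non-vanishing of self-Floer cohomology, which is exactly the input that \Cref{thm1} requires.

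First I would appeal to the non-vanishing criterion \Cref{H1gen} of \Cref{FLoer}. Its hypotheses are precisely the two assumptions of the corollary, namely that the minimal Maslov number of $L$ is at least $2$ and that $H^\bullet(L)$ is generated in degree $1$. Under these assumptions the criterion asserts that $df(\bL)=0$ forces $HF^\bullet(\bL,\bL)=H^\bullet(L,\Lambda)$, which is nonzero since $L$ is a nonempty compact connected manifold. This step thus certifies the hypothesis of \Cref{thm1}. Second, with $HF^\bullet(\bL,\bL)\neq 0$ now established, \Cref{thm1} applies directly and yields $\fz_T(\bL)\in Z(G^\vee_\C)$, which is the assertion to be proved.

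At the level of the corollary there is no genuine obstacle: the entire content is carried by its two inputs. The hard work lies in \Cref{thm1} itself, whose proof proceeds by a spectral sequence comparison of $HF^\bullet(\bL,\sigma(\bL))$ with the classical cohomology of $L\cap\sigma(L)$, followed by the reduction of the $\operatorname{SU}(2)$ case to the $\operatorname{U}(2)$ case, and in \Cref{H1gen}. The only point requiring attention here is to confirm that the degree-$1$ generation hypothesis is exactly what lets the single condition $df(\bL)=0$, rather than the vanishing of the full bounding-cochain obstruction, collapse the Floer complex to $H^\bullet(L,\Lambda)$; this is the mechanism behind \Cref{H1gen} and the reason both hypotheses are imposed in the corollary.
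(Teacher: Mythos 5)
Your proof is correct and is exactly the paper's argument: the corollary is stated there as an immediate consequence of \Cref{H1gen} (which, under the minimal Maslov number and degree-$1$ generation hypotheses, gives $m_1=0$ and hence $HF^\bullet(\bL,\bL)=H^\bullet(L,\Lambda)\neq 0$ whenever $df(\bL)=0$) combined with \Cref{thm1}. Nothing further is needed.
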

There is a natural homomorphism $o_*:H_1(T;\F)\to H_1(L;\F)$ induced by the orbit map \eqref{eq:orbit_map}. An element $\alpha\in\Phi$ induces a linear map $H_1(T;\F)\to\F$, which we also denote by $\alpha$. The following theorem refines \Cref{cor1} in accordance with~\eqref{eq:Z(G)=int_ker}.
\begin{thm}\label{thm2}
Assume the minimal Maslov number of $L$ is at least $2$, and $H^\bullet(L)$ is generated in degree 1. Let
$\alpha\in\Phi(G,T)$. If $df(\bL)\in o_*(\ker(\alpha))$, then $\fz_T(\bL)\in \ker (\alpha^\vee)$.
\end{thm}
Let $a\in H_1(T;\F)$. Then the equation $df(\bL)=o_*(a)$ is analogous to the condition $df|_p=\fz_T^*da|_p$ in \eqref{eq:ZTY}. Thus, \Cref{thm2} gives a Floer-theoretic reformulation of \Cref{conj2}.

Although the proofs of \Cref{thm1,thm2} are logically independent of the results of the earlier sections, the idea of the proofs is to reduce these theorems to statements parallel to \Cref{lem:criterion_bbLG1,lem:criterion_bbLG2}. The argument below also follows the flow of the proof of \Cref{thm:Coulombbranch=Whilb}.

\begin{lem}\label{lem:without_spn_thm12}
    Suppose that $G$ has no direct factor isomorphic to
    $\operatorname{Sp}(n)$. Then, for any $\alpha\in \Phi$
    \[
    \ker(\alpha^\vee)=\check T_\C^{s_\alpha}.
    \]
    In particular,
    \[
    Z(\check G_\C)=\check T_\C^W.
    \]
\end{lem}

\begin{proof}
    It is clear that $\ker(\alpha^\vee)\subset \check T_\C^{s_\alpha}$. We prove the reverse inclusion.

    By \Cref{Liegroup_lemma}, there exists a cocharacter $\lambda$ of $T$ such that $\langle\alpha,\lambda\rangle=1$. We regard $\lambda$ as a character of $\check T_\C$. Let $z\in \check T_\C^{s_\alpha}$. Then
    \[
        \lambda(z)=\lambda(s_\alpha(z))=s_\alpha(\lambda)(z)=\alpha^\vee(z)^{-1}\lambda(z).
    \]
    Hence $\alpha^\vee(z)=1$, or equivalently $z\in \ker(\alpha^\vee)$. This proves the first statement. The second statement follows from the first statement together with \eqref{eq:Z(G)=int_ker}.
\end{proof}

As a result, \Cref{thm1,thm2} are equivalent to the following lemmas when $G$ does not contain a direct $\operatorname{Sp}(n)$ factor.
\begin{lem}\label{lem:thm1}
If $HF^\bullet(\bL,\bL)\neq 0$, then $\fz_T(\bL) \in \check T_\C^W$.
\end{lem}
\begin{lem}\label{lem:thm2}
Assume the minimal Maslov number of $L$ is at least $2$, and $H^\bullet(L)$ is generated in degree 1. Let $\alpha\in\Phi(G,T)$. If $df(\bL)\in o_*(\ker(\alpha))$, then $\fz_T(\bL)\in \check T_\C^{s_\alpha}$.
\end{lem}

We now reduce the general case to the case when $G$ has no $\operatorname{Sp}(n)$ factor.  
Consider a short exact sequence
\begin{align}
    1\to G&\to G'\to Q\to 1\\
    1\to T&\xrightarrow{\rho} T'\to Q\to 1.
\end{align}
as in \eqref{ses_compact} and \eqref{ses_torus}. We can write $G'=(G\times Q')/\Gamma$ for some finite subgroup $\Gamma\subset Z(G\times Q')$. Choose a compact manifold $Z$ equipped with a free $G\times Q'$-action. We will choose $Z$ so that $\pi_1(Z)=\pi_2(Z)=0$, and set
\[
Y'=(Y\times T^*Z)/\Gamma.
\]
The product $Y\times T^*Z$ carries the symplectic form
\[
\omega_Y+\omega_{\mathrm{can}},
\]
where $\omega_{\mathrm{can}}=d\gamma_{\mathrm{can}}$ and $\gamma_{\mathrm{can}}$ is the canonical one-form on $T^*Z$.
The natural $G\times Q'$-action on $Y\times T^*Z$ is Hamiltonian,
with moment map
\[
\mu_{Y\times T^*Z}(y,x)(\xi)
=
\mu_Y(y)(\xi_1)-\gamma_{\mathrm{can}}(\xi^\sharp)(x),\qquad \xi=(\xi_1,\xi_2)\in \lig\oplus \mathfrak q'.
\]
Since $\Gamma$ acts freely and preserves these data, the symplectic
form and moment map descend to a symplectic form $\omega'$ and a
$G'$-moment map $\mu_{G'}'$ on $Y'$.

Similarly, the pullback of $\Tilde{B}$ to $Y\times T^*Z$ descends to an equivariant B-field $\Tilde{B}'=B'-\kappa'$ on $Y'$, and the pullback of $E$ to $L\times Z$ descends to a unitary complex line bundle $E'$ on $L'=(L\times Z)/\Gamma$ with $F_{E'}=iB'|_{L'}$. We set $\bL'=(L',E')$.

The following theorem addresses \Cref{conj}~(3) in view of \Cref{lem:criterion_bbLG1}.
\begin{thm}\label{thm3}
\begin{enumerate}
    \item[(i)] $HF^\bullet(\bL',\bL')\cong HF^\bullet(\bL,\bL)\otimes H^\bullet(Z;\F)$.
    \item[(ii)] $df_{Y'}(\bL')$ is equal to the image of $df_Y(\bL)$ under the natural map
 \[
 H_1(L;\F)\to H_1(L;\F)\oplus H_1(Z;\F)\xrightarrow{\cong} H_1(L\times Z;\F)\to H_1(L';\F),
 \]
 and $\fz_{T}(\bL)$ is equal to the image of $\fz_{T'}(\bL')$ under the quotient map
 \[
 \check \rho:\check T'_\C\to \check T_\C.
 \]
\end{enumerate}

\end{thm}
\begin{prop}\label{prop:reduction_to_no_sp(n)}
\begin{enumerate}
    \item \Cref{thm1} follows from \Cref{lem:thm1} and \Cref{thm3}.
    \item \Cref{thm2} follows from \Cref{lem:thm2} and \Cref{thm3}.
\end{enumerate}
\end{prop}
\begin{proof}  
    We only prove the first statement; the second statement can be proved similarly, and is actually not needed in the remainder of the paper. We can find a short exact sequence \eqref{ses_compact} such that $G'$ does not contain a direct $\operatorname{Sp}(n)$ factor. Since the map $\check G'_\C\to \check G_\C$ sends the center to the center, by \Cref{thm3}, it suffices to show that $\fz_{T'}(\bL')\in Z(\check G'_\C)$. By \Cref{lem:without_spn_thm12}, this is equivalent to showing that $\fz_{T'}(\bL')$ is fixed by the Weyl group, which is the content of \Cref{lem:thm1}.
\end{proof}
\begin{proof}[Proof of \Cref{thm3}]
    We first prove the second statement of part (ii). Let $p\in L$, let $q\in Z$, and let $p'\in L'$ be the image of $(p,q)$. The statement $\check \rho(\fz_{T'}(\bL'))=\fz_{T}(\bL)$ is equivalent to
    \[
    \begin{aligned}
    \rho^*(\mu_{T'}(L'))&=\mu_T(L),\\
    \operatorname{Hol}_{T'}^{p'}(E')|_T
    &=\operatorname{Hol}_T^p(E),\\
    \rho^*(\kappa_{T'}'(p'))&=\kappa_T(p).
    \end{aligned}
    \]
    These identities are clear from the construction.

    Since $\pi_2(Z)=0$, we have an isomorphism
    \[
    i:\pi_2(Y,L)\xrightarrow{\cong} \pi_2(Y',L'),\qquad \beta\mapsto \pr_*(\beta\times 1),
    \]
    where $\pr:Y\times T^*Z\to Y'$ is the projection. Let
    \[
    \bL_Z=(L_Z,E_Z)=\bigl(L\times Z,\operatorname{pr}_L^*E\bigr)
    \]
    be the pullback Lagrangian brane on $Y\times T^*Z$. Since $T^*Z$ is exact, any pseudoholomorphic disc in $Y\times T^*Z$ bounding $L_Z$ must be constant in the $T^*Z$ component (we fix some almost complex structure on $T^*Z$). Therefore, we have
    \begin{equation}\label{eq:moduli_LQ_L}
        \mathfrak{M}_2(L_Z, L_Z;\beta\times \pt)=\mathfrak{M}_2(L, L;\beta)\times Z
    \end{equation}
    Moreover, $\Gamma$ naturally acts on $\mathfrak{M}_2(L, L;\beta)$ and $\mathfrak{M}_2(L', L';\beta)$ so that \eqref{eq:moduli_LQ_L} is $\Gamma$-equivariant. Since the $\Gamma$-action on $Y\times T^*Z$ is free, we also have
    \begin{equation}\label{eq:moduli_LQ_L'}
    \mathfrak{M}_2(L', L';\beta\times \pt)=(\mathfrak{M}_2(L_Z, L_Z;\beta\times \pt))/\Gamma
    \end{equation}
    In particular, we have 
    \[
    n_{i(\beta)}=n_{\beta\times \pt}=n_\beta,
    \]
    which proves the first statement of (ii).

    It remains to prove part (i). It is clear from \eqref{eq:moduli_LQ_L} that $HF^\bullet(\bL_Z,\bL_Z)\cong HF^\bullet(\bL,\bL)\otimes H^\bullet(Z;\F)$, so it remains to prove that $HF^\bullet(\bL',\bL')\cong HF^\bullet(\bL_Z,\bL_Z)$. Note that \eqref{eq:moduli_LQ_L'} implies that there is a chain map
    \[
    \pr^*:CF^\bullet(\bL',\bL')\to CF^\bullet(\bL_Z,\bL_Z).
    \]
    We will need to show that $\pr^*$ is a quasi-isomorphism. By considering the spectral sequence associated to the energy filtration, it suffices to show that the classical pullback (denoted by the same symbol)
    \[
    \pr^*:H^\bullet(\bL';\F)\to H^\bullet(L_Z;\F)
    \]
    is an isomorphism. In general, $\pr^*$ gives an isomorphism
    \[
    \pr^*:H^\bullet(\bL';\F)\to H^\bullet(L_Z;\F)^\Gamma,
    \]
    so we need to show that $\Gamma$ acts trivially on $H^\bullet(L_Z;\F)$. This follows from the fact that the $\Gamma$-action on $L_Z$ extends to the action of the connected group $T\times Q'\supset \Gamma$. The proof is now complete.
\end{proof}

\section{Proofs of Theorems~\ref{thm1} and \ref{thm2}}\label{section:proof_of_theorems}
\begin{proof}[Proof of \Cref{thm1}] 
Let $s\in W$ be a reflection. We need to show that both $\mu_T(L)$ and $\HH_T(E)$ are fixed by $s$. First note that, since $G$ is connected, we have $HF^\bullet(\bL,s(\bL))\cong HF^\bullet(\bL,\bL)\neq 0$.

Now suppose $s(\mu_T(L))\neq\mu_T(L)$. Then, since
\begin{equation}\label{momentinvariance}
s(\mu_T(L))=\mu_T(s(L))\neq \mu_T(L),
\end{equation}
we have $L\cap s(L)=\varnothing$. This contradicts the assumption that $HF^\bullet(\bL,s(\bL))\neq 0$. Note that we also show that $L\cap s(L)\neq \varnothing$.

Next, suppose $s(\HH_T(E))\neq \HH_T(E)$. We may assume the $G$-action on $Y$ is free. Indeed, choose a
compact simply connected smooth manifold $Z$ equipped with a free
$G$-action, and replace
\[
Y
\quad\text{and}\quad
\bL=(L,E)
\]
by
\[
Y\times T^*Z
\quad\text{and}\quad
\bL_Z
=
\bigl(L\times Z,\operatorname{pr}_L^*E\bigr),
\]
respectively. We need the following lemma.

\begin{lem}\label{LemClean}
    There exists a $T$-invariant Lagrangian brane $\bL'=(L',E')$ such that $\bL'$ is $T$-equivariantly Hamiltonian isotopic to $s(\bL)$, and such that the intersection of $L$ and $L'$ is clean and consists of finitely many $T$-orbits.
\end{lem}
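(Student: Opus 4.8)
The plan is to produce the Lagrangian $L'$ by flowing $\sigma(L)$ along a carefully chosen $T_1$-invariant Hamiltonian isotopy, and to arrange cleanliness of the intersection $L\cap L'$ by a transversality argument performed equivariantly, i.e.\ ``downstairs'' on a quotient. Since $T_1$ acts locally freely on $Y$ by assumption, the key geometric input is that near the (compact) intersection locus we may work $T_1$-equivariantly and reduce cleanliness along $T_1$-orbits to ordinary transversality of the images in a local slice. First I would fix a $T_1$-invariant tubular structure: because the $T_1$-action is locally free, each orbit $T_1\cdot x$ has a $T_1$-invariant neighborhood equivariantly modeled on $T_1\times_{\Gamma_x} S_x$ for a finite isotropy $\Gamma_x$ and a slice $S_x$, and both $L$ and $\sigma(L)$ are $T_1$-invariant, so they are swept out by $T_1$-orbits and descend to submanifolds of the local slices.

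The main step is the transversality/cleanliness argument. I would consider the space of $T_1$-invariant Hamiltonians $H\colon Y\to\R$ (equivalently, functions on the quotient) and the resulting time-one flows $\phi_H$, which are automatically $T_1$-equivariant since $H$ is $T_1$-invariant and the $T_1$-action is Hamiltonian. Setting $L'=\phi_H(\sigma(L))$ keeps $L'$ both Lagrangian and $T_1$-invariant, and $T_1$-Hamiltonian equivalent to $\sigma(\bL)$ (transporting the brane data $E'$ via parallel transport along the isotopy as in the preceding lemma). I would then invoke a Sard--Smale style argument: for a generic such $H$, the submanifolds $L$ and $\phi_H(\sigma(L))$ meet \emph{cleanly}, meaning that along each component of $L\cap L'$ the tangent spaces satisfy $T_x(L\cap L')=T_xL\cap T_xL'$. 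Working in a local slice $S_x$, the orbit directions $\lt_1\cdot x$ automatically lie in $T_xL\cap T_xL'$ (both are $T_1$-invariant and contain the orbit), so transversality need only be achieved in the slice directions; a generic $T_1$-invariant Hamiltonian suffices because $T_1$-invariant perturbations act transitively enough on the normal slice data. Cleanliness (rather than full transversality) is forced precisely by the presence of the common orbit directions, and the connected components of the clean intersection are then unions of $T_1$-orbits.

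Finally, I would use compactness to conclude finiteness. Since $L$ is compact and $L'=\phi_H(\sigma(L))$ is the image of a compact manifold, the intersection $L\cap L'$ is compact; cleanliness makes it a compact manifold each of whose components, being $T_1$-invariant and containing no direction transverse to the orbit within itself, is a single $T_1$-orbit. Compactness then bounds the number of such orbits, giving finitely many $T_1$-orbits as claimed.

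I expect the main obstacle to be the transversality step carried out \emph{equivariantly}: one cannot ask for generic $T_1$-invariant $H$ to make $L$ and $\phi_H(\sigma(L))$ transverse (the shared orbit directions obstruct this), so the correct statement and proof must target \emph{clean} intersection along orbits, and one must verify that the restricted class of $T_1$-invariant perturbations is still large enough to achieve genericity in the slice directions. This is exactly the content deferred to \Cref{Prooflemma}, and handling the finite isotropy groups $\Gamma_x$ (so that the quotient is an orbifold rather than a manifold) is the delicate point that the slice-wise formulation is designed to absorb.
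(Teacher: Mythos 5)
Your plan is correct in substance and identifies exactly the right target (clean intersection along orbits, since genuine transversality is unattainable), but it takes a different route from the paper's proof in \Cref{Prooflemma}. You pass to local slices $T_1\times_{\Gamma_x}S_x$ and the (orbifold) quotient, and invoke a Sard--Smale-type genericity statement for $T_1$-invariant Hamiltonians. The paper instead stays in $Y$ and runs a finite-dimensional parametric transversality argument: it chooses finitely many $T_1$-invariant functions $f_1,\dots,f_k$ whose Hamiltonian vector fields $X_1,\dots,X_k$ span the symplectic complement $(T_p(T_1\cdot p))^{\perp\omega}$ near the compact set $P=L\cap\sigma(L)$, then adds one auxiliary vector field $X_0$ with $d\mu_{T_1}(X_0)\neq 0$ on $P$ --- compensating for the single direction that invariant Hamiltonian flows can never produce, since they preserve the level $\mu_{T_1}^{-1}(c)$ --- and applies the Guillemin--Pollack transversality theorem to the parametrized map $(p,t_0,\dots,t_k)\mapsto \phi^k_{t_k}\circ\cdots\circ\phi^0_{t_0}(p)$. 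A generic parameter choice $(a_1,\dots,a_k)$ yields $\dim(T_pL\cap T_pL')\leq 1$ for $L'=\phi^k_{a_k}\circ\cdots\circ\phi^1_{a_1}(\sigma(L))$ (the $X_0$-flow is used only as a parameter direction and is \emph{not} applied to $\sigma(L)$, so $L'$ remains $T_1$-invariant and $T_1$-Hamiltonian equivalent to $\sigma(L)$), and $T_1$-invariance upgrades the dimension bound to cleanness along finitely many orbits. The paper's route buys an elementary, self-contained argument that sidesteps both orbifold singularities from the finite isotropy groups and any infinite-dimensional genericity machinery; your route is conceptually cleaner (transversality in the reduced space literally \emph{is} cleanness upstairs), but it defers the one nontrivial point --- that invariant perturbations ``act transitively enough on the normal slice data'' --- as an assertion, and this is precisely what the paper's explicit construction of $X_1,\dots,X_k$ establishes: invariant Hamiltonian vector fields span $(T_p(T_1\cdot p))^{\perp\omega}$, i.e.\ the tangent space to the moment level, and hence surject onto the tangent space of the reduction.

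Two refinements to your write-up. First, the structural obstruction is sharper than ``shared orbit directions'': since both $L$ and $\sigma(L)$ lie in the same moment level (their moment values agree by Step 1 of the proof of \Cref{thm1}) and invariant Hamiltonian flows preserve that level, $T_pL+T_pL'$ is always contained in a hyperplane; for Lagrangians this is equivalent to your formulation, but it is the form in which the paper exploits it via $X_0$. Second, because of the B-field, constructing the brane datum $E'$ is not bare parallel transport: the definition of $T_1$-Hamiltonian equivalence in \Cref{FLoer} requires a connection whose curvature equals $i\psi_t^*B$ along the isotopy, and the paper arranges this by choosing a connection $\nabla$ on the pullback of $E$ under the parametrized flow with $F_\nabla=i\widetilde\Psi^*B$ and restricting to the chosen parameters; your sketch should be adjusted accordingly when $B\neq 0$.
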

\begin{proof}
Let $c=\mu_T(L)=\mu_T(s(L))$. Then
\[
\overline L_1=L/T,\qquad \overline L_2=s(L)/T
\]
are Lagrangian submanifolds of $Y/\!/_c Y=\mu_T^{-1}(c)/T$. By Hamiltonian transversality, after an arbitrarily small Hamiltonian isotopy of $\overline L_2$, the submanifolds $\overline L_1$ and $\overline L_2$ intersect transversely, and hence in finitely many points. Let $\overline h_t$ be a Hamiltonian function generating such a Hamiltonian isotopy.

Let $\pr:\mu_T^{-1}(c)\to Y/\!/_c Y$ be the quotient map, and choose a smooth $T$-invariant function $h_t:Y\to \R$ that agrees with $\overline h_t\circ\pr$ on $\mu^{-1}(c)$. It is straightforward to check that $h_t$ generates a Hamiltonian isotopy which gives the desired $\bL'$.
\end{proof}
Replacing $s(\bL)$ with $\bL'$ if necessary, we may assume that $L$ and $s(L)$ intersect cleanly along finitely many $T$-orbits. By a spectral sequence argument in Floer theory (see (\ref{inequality}) in \Cref{appendix:Floer_theory}), 
\begin{equation}\label{H=0}
   HF^\bullet(\bL,s(\bL))\neq 0 \implies  H^\bullet(L\cap s(L),\Hom(E,s(E))\otimes \Theta)\neq 0,
\end{equation}
where $\Theta$ is a certain $\Z_2$-local system on $L\cap s(L)$, depending on the relative spin structure of $L$. 

We now show that the rank-one local system $\Hom(E,s(E))\otimes \Theta$ is nontrivial. Let $p\in L\cap s(L)$. We will show in \Cref{appendix:Floer_theory} that $\hol^p_{T}(\Theta)=1$. Therefore,
\[
\hol^p_{T}(\Hom(E,s(E))\otimes \Theta)=\hol^p_{T}(E)^{-1}\hol^p_{T}(s(E))=\HH_{T}(E)^{-1}s(\HH_{T}(E))\neq 1.
\]
The last equality follows from
\begin{equation*}
    \HH_{T}(E)=\hol^p_{T}(E)\exp(-i\kappa_{T}(p)),
\end{equation*}
and
\begin{equation*}
    \HH_{T}(s(E))=\hol^p_{T}(s(E))\exp(-i\kappa_{T}(p)).
\end{equation*}
\Cref{topological_lemma} below contradicts \eqref{H=0}. Hence, we must have $s(\HH_T(E))=\HH_T(E)$, which proves \Cref{lem:thm1}. \Cref{thm1} then follows from \Cref{prop:reduction_to_no_sp(n)}.

\begin{lem}\label{topological_lemma}
    Let $\mathcal{L}$ be a nontrivial rank-one local system on a compact torus $T$. Then $H^k(T,\mathcal{L})=0$ for all $k$.
\end{lem}
\begin{proof}
    We prove this by induction on $\dim T$. Suppose $\dim T=1$. Then $H^0(T,\mathcal{L})=0$ because $\mathcal{L}$ is nontrivial. Also, $H^k(T,\mathcal{L})$ vanishes for $k>1$ for dimension reasons, and it vanishes for $k=1$ because the Euler characteristic of $T$ is zero.

    In the general case, if $\mathcal{L}$ is nontrivial, the monodromy of $\mathcal{L}$ is nontrivial in the direction of a certain $\operatorname{U}(1)\subset T$. Let $p:T\to T/\operatorname{U}(1)$ be the quotient map. Then we have
    \[
    Rp_{*}\mathcal{L}=0
    \]
    by the $k=1$ case. The result now follows.
\end{proof}
This completes the proof of \Cref{thm1}.
\end{proof}

We need some preparation before proving \Cref{thm2}. Let $Z$ be a
smooth simply connected manifold equipped with a free $G$-action.
Then $T^*Z$ is naturally a Hamiltonian $G$-manifold. We fix a compatible almost complex structure on $T^*Z$. Let $Y'=Y\times T^*Z$, $L'=L\times Z\subset Y\times T^*Z$. Moreover, let $T_2\subset T$ be a subtorus, and write $c=\mu^Y_{T_2}(L)$, $Y''=(\mu^{Y'}_{T_2})^{-1}(c)/T_2$, and $L''=L'/T_2\subset Y''$. Let $\pi:(\mu^{Y'}_{T_2})^{-1}(c)\to Y''$ be the projection map.
\begin{lem}\label{LemTquotient}
There is a smooth deformation retract $r:Y'\to (\mu^{Y'}_{T_2})^{-1}(c)$. Moreover, if for each $\beta\in \pi_2(Y, L)$, we write $\beta''\in \pi_2(Y'',L'')$ for the image of $\beta$ under the composition
\[Y\cong Y\times \pt\subset Y'\xrightarrow{\pi\circ r}Y'',\]
then
     \begin{enumerate}[(a)]
        \item The assignment $\beta\mapsto\beta''$ gives a bijection $\iota: \pi_2^{\text{eff}}(Y, L) \to \pi_2^{\text{eff}}(Y'',L'')$ between effective disc classes that preserves the Maslov index and symplectic area. 
        \item For each $\beta\in \pi_2^{\text{eff}}(Y, L)$, $n_{\beta''}=n_\beta$.
        \item For each $\beta\in \pi_2^{\text{eff}}(Y, L)$, we have a commutative diagram
        \begin{equation*}
            \begin{tikzcd}
                \mathfrak{M}_2(L'',\beta'')\ar[r]\ar[d,"\operatorname{ev}_j"]&Z/T_2\ar[d,equal]\\
                L''\ar[r]&Z/T_2
            \end{tikzcd},
        \end{equation*}
        where $\mathfrak{M}_2(L'',\beta'')$ is the moduli space of holomorphic discs $u:\D\to Y''$ with two boundary marked points, representing the class $\beta''$, and $\operatorname{ev}_j$ for $j=0,1$ are the evaluation maps at the two marked points.
    \end{enumerate}
\end{lem}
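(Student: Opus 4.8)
The plan is to exploit two features of the auxiliary factor $T^*Z$: that the diagonal $T$-action on $Y'=Y\times T^*Z$ is free (since $T\subset G$ acts freely on $Z$), and that the zero section $Z\subset T^*Z$ is an \emph{exact} Lagrangian. Freeness makes $c$ a regular value of $\mu^{Y'}_T$ and $\pi\colon (\mu^{Y'}_T)^{-1}(c)\to Y''$ a principal $T$-bundle; exactness will force all holomorphic discs to be constant in the cotangent directions, which is what ultimately yields the bijection of disc classes and the equality of counts.

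For the deformation retract I fix a $T$-invariant Riemannian metric on $Z$ (by averaging) and use it to identify $T^*Z\cong TZ$ and to split each fibre as $W_z\oplus W_z^\perp$, where $W_z=\operatorname{im}(A_z)$ and $A_z\colon \lt\to T_zZ$ is the injective infinitesimal action. In these terms $\mu^{T^*Z}_T$ is the fibrewise linear map $v\mapsto A_z^*v$, so I define $r_t$ to fix the $W_z^\perp$-component and linearly drag the $W_z$-component from its original value to $A_z(A_z^*A_z)^{-1}(c-\mu^Y_T(y))$. At $t=1$ this lands in $(\mu^{Y'}_T)^{-1}(c)$, it fixes that level set pointwise, and since $c-\mu^Y_T(y)=0$ on $L$ it fixes $L'$ as well, giving the strong deformation retract $r$. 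Combined with $\pi_2(T^*Z,Z)=0$ (the zero section is a deformation retract of $T^*Z$) and \Cref{Y'disc}, this lets me define $\beta\mapsto\beta''$ and identify the relevant disc classes.

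The heart of the argument is a fibration description of the reduced space. Because $T$ acts freely on $Z$, I will show $Y''$ is a symplectic fibre bundle $\rho\colon Y''\to T^*\bar Z$ over a (twisted) cotangent bundle of $\bar Z:=Z/T$, with fibre $(Y,\omega)$ and with $L''=Z\times_T L$ lying over the zero section $\bar Z$. In the local model $Y\times T^*\bar Z\times T\times \lt^*$ for $Y'$ the moment map is $\mu^Y_T(y)+\eta$; solving $\eta=c-\mu^Y_T(y)$ and quotienting the $T$-factor produces $Y\times T^*\bar Z$, the base being simply connected since $Z$ is, and the gluing produces exactly the extra B-field anticipated in the introduction. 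I then choose on $Y''$ a fibred, $\omega_{Y''}$-tamed almost complex structure whose horizontal part is pulled back from $T^*\bar Z$ and whose vertical part is the given $T$-invariant $J$ on each fibre; this is legitimate because the counts $n_\beta$ are independent of the compatible almost complex structure used to define them. With this choice $\rho$ is pseudo-holomorphic, so for any holomorphic disc $\bar u\colon(\D,\partial\D)\to(Y'',L'')$ the composite $\rho\circ\bar u$ is a holomorphic disc in $(T^*\bar Z,\bar Z)$; as the zero section is exact it has zero area and hence is constant. Therefore $\bar u$ lies in a single fibre $\rho^{-1}(\mathrm{pt})\cong(Y,L)$ and is, under this identification, an honest $J$-holomorphic disc in $(Y,L)$.

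This fibrewise picture gives all three statements at once. The identification fibre $\cong(Y,L)$ matches symplectic areas (the magnetic term vanishes on fibres) and Maslov indices (the horizontal cotangent directions split off as a constant disc on a zero section, contributing $0$), proving (a). The moduli space $\mathfrak{M}_2(L'',\beta'')$ is then identified with the family over $Z/T$ of fibrewise moduli spaces $\mathfrak{M}_2(L,\beta)$; fixing the image point in $L''$ fixes the fibre $[z]\in\bar Z$ and reduces the count to the fibre count, giving $n_{\beta''}=n_\beta$, which is (b). Finally, since each disc is contained in one fibre, both evaluation maps $\operatorname{ev}_0,\operatorname{ev}_1$ record the same point of $Z/T$, which is precisely the commutativity of the diagram in (c). I expect the main obstacle to be the construction and control of the reduced geometry near $L''$: verifying that the fibred almost complex structure can be chosen $\omega_{Y''}$-tamed globally despite the magnetic cross-term, and that the identification of each fibre with $(Y,J)$ is consistent across $\bar Z$, which rests on the $T$-invariance of $J$. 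Once the pseudo-holomorphic fibration $\rho$ is in place, the exactness-forces-constant step is clean.
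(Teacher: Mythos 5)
Your proposal is correct in substance, but it is worth separating its two halves, because they compare differently against the paper. For the deformation retract, your construction is essentially identical to the paper's: your metric splitting of the fibres of $TZ\cong T^*Z$ into $W_z=\operatorname{im}(A_z)$ and $W_z^{\perp}$ is the metric dual of the paper's decomposition $T^*Z=V_1\oplus V_2$ (with $V_1$ the conormals to the $T$-orbits), and your linear interpolation of the $W_z$-component onto the unique value solving the moment map constraint is exactly the paper's $r=\chi^{-1}\circ\pr$, written as an explicit homotopy. For parts (a)--(c), however, the paper gives no argument at all: it quotes Lemma 3.33 and Corollaries 3.34--3.35 of \cite{LLL} (see also \cite{KLZ}, \cite{Cazassus}). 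Your pseudo-holomorphic fibration $\rho\colon Y''\to T^*\bar Z$, $\bar Z=Z/T$, with fibre $(Y,\omega,J)$ and $L''$ sitting over the zero section, followed by the chain ``$\rho$ holomorphic $\Rightarrow$ $\rho\circ\bar u$ is a holomorphic disc on an exact Lagrangian boundary condition $\Rightarrow$ constant $\Rightarrow$ discs are fibrewise,'' is precisely the geometric mechanism behind those cited results, and it does yield (a) (areas and Maslov indices split, with zero horizontal contribution), (b) (the moduli space becomes a family of copies of $\mathfrak{M}_2(L,\beta)$ over $\bar Z$), and (c) (both evaluation maps remember the same fibre). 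So what your write-up buys is a self-contained proof where the paper has a citation; what the paper's route buys is brevity and delegation of the technical points below to a reference.

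On the technical worry you single out: it is less serious than you fear, and in particular the ``twisted'' cotangent structure on the base is a red herring for the constancy step. The projection argument only requires that $\rho$ be $(J'',J_b)$-holomorphic for some almost complex structure $J_b$ on $T^*\bar Z$ tamed by the \emph{standard} exact form $d\lambda$; whether $\omega_{Y''}$ restricted to the horizontal distribution equals $\rho^*d\lambda$ or acquires magnetic corrections $\langle c-\mu^Y_T(y),F\rangle$ plays no role in the computation $\int (\rho\circ\bar u)^*d\lambda=\int_{\partial}(\rho\circ\bar u)^*\lambda=0$ with pointwise nonnegative integrand. Consequently, global $\omega_{Y''}$-taming of the fibred $J''$ is not needed to confine discs to fibres; once confinement is known, Gromov compactness and the identification $\mathfrak{M}_2(L'',\beta'')\cong\mathfrak{M}_2(L,\beta)\times \bar Z$ can be run fibrewise, where $J''$ restricts to the honestly $\omega$-compatible $J$. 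What does remain is bookkeeping: the counts $n_{\beta''}$ appearing in the lemma must be defined with respect to this fibred $J''$ (or one must check invariance when passing to another admissible structure, e.g.\ the reduced almost complex structure), and the Kuranishi data must be chosen compatibly with the product description. This is exactly the content delegated to \cite{LLL}, and it is a matter of care rather than a gap in your idea.
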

\begin{proof}[Proof of \Cref{LemTquotient}]
Let $V_1\subset T^*Z$ be the conormal bundle to the foliation of
$Z$ by $T_2$-orbits, and choose a complementary vector subbundle
$V_2$ so that
\[
T^*Z=V_1\oplus V_2.
\]
Note that the projection $\pr:Y'=Y\times T^*Z\to Y\times V_1$ restricts to a diffeomorphism $\chi :(\mu^{Y'}_{T_2})^{-1}(c)\to Y\times V_1$. The retraction $r:Y'\to (\mu^{Y'}_{T_2})^{-1}(c)$ is defined by $r=\chi^{-1}\circ \pr$. $r$ is a deformation retract because $\chi^{-1}$ is a section for the vector bundle $Y'\to Y\times V_1$, see the proof of Proposition 4.7 in \cite{Cazassus} for more details.

Parts (a), (b), and (c) of \Cref{LemTquotient} follow from \cite[Lemma~3.35, Corollary~3.36 and Corollary~3.37]{LLL} (see also \cite{KLZ} and \cite{Cazassus}).
\end{proof}

Let $\psi:H_1(L;\F)\to H_1(L'';\F)$ be the pushforward via the composition $L\to L\times \pt\subset L'\to L''$. We now consider the case when $G=G_1\times T_2$ for a compact connected Lie group $G_1$. Let $T=T_1\times T_2$ be the induced decomposition.
\begin{prop}\label{prop:equality_df_fz}
There exist a $G_1$-equivariant B-field $\tilde B''$ on $Y''$ and a Lagrangian brane $\bL''=(L'',E'')$ such that
\[
df(\bL'')=\psi(df(\bL))
\qquad\text{and}\qquad
\fz_{T_1}(\bL)=\fz_{T_1}(\bL'').
\]
\end{prop}
\begin{proof}

Let $B'$ (resp. $\kappa'$) be the pullback of $B$ (resp. $\kappa$) via the projection $Y'\to Y$, and $E'$ be the pullback of $E$ via the projection $L'\to L$. Since $\kappa'$ is $T_2$-invariant, it also descends to a $G_1$-equivariant map $\kappa'':Y''\to \lig^*$.

It is clear that \begin{equation}\label{B'eq}
    \iota_{\xi^\sharp}B'=d\langle \kappa',\xi\rangle
\end{equation}
for all $\xi\in \lig$. We can rephrase this statement as follows. Let $\theta\in \Omega^1(Y')\otimes \lig$ be a connection form for the principal $G$-bundle $Y'\to Y'/G$, then
\[
B'+d\langle \kappa',\theta\rangle
\]
is a closed basic form with respect to the $G$-action. We write 
\begin{equation}\label{conndecomp}
    \theta=\theta_{G_1}+\theta_{T_2}
\end{equation} 
according to the decomposition $\lig=\lig_1\oplus \lt_2$. Then $B'+d\langle \kappa_{T_2}',\theta_{T_2}\rangle$ is basic with respect to the $T_2$-action. We denote the projections $Y'\to Y'/T_2$, $(\mu^{Y'}_{T_2})^{-1}(c)\to Y''$ and $L'\to L''$ by the same letter $\pi$. There exists a closed two-form $B''_0$ such that (\cite{SUSYeq})
\begin{equation}\label{basicEq}
    \pi^* B''_0=B'+d\left(\langle \kappa_{T_2}',\theta_{T_2}\rangle\right).
\end{equation}
On the other hand, let $\nabla_{E'}=d+i\gamma'$. We can write
\begin{equation}\label{basicconn}
    \gamma'=\pi^*\gamma''+\langle \pi^*\tau, \theta_{T_2}|_{L'}\rangle.
\end{equation}
where $\gamma''$ is a $T_1$-invariant one-form on $L''$ and $\tau:L''\to \lt_2^*$ is a $G_1$-invariant function. The condition $d\gamma'=B'|_{L'}$ now becomes
\begin{align}
    B_0''|_{L''}&=d\gamma''+\langle\kappa'_{T_2}+\tau,\Omega|_{L''}\rangle\label{Beq1}\\
     d\kappa''_{T_2}|_{L''}&=-d\tau\label{Beq2}.
\end{align}
Here, $\Omega$ is the curvature form for the principal $T_2$-bundle $Y'\to Y'/T_2$, i.e. $\pi^*\Omega=d\theta_{T_2}$. \Cref{Beq2} implies that $\kappa''_{T_2}+\tau$ is equal to some constant $C\in \lt_2^*$ on $L''$. We now define $B''$ and $\nabla_{E''}$ by
\begin{align}
    B''&=B''_0-\langle C,\Omega\rangle\label{brane1}\\
    \nabla_{E''}&=d+i\gamma''\label{brane2}.
\end{align}

\begin{lem}\label{checkings}
The following statements are true.
    \begin{enumerate}[(i)]
    \item $B''$ is closed.
    \item $ \iota_{\xi^\sharp}B''=d\langle \kappa_{G_1}'',\xi\rangle$ for $\xi\in \lig_1$.
    \item $d\gamma''=B''|_{L''}$.
    \item If $\beta\in \pi_2(Y,L)$, and $\beta''=(\pi\circ r)_*(\beta\times\pt)\in \pi_2(Y'',L'')$ where $r$ is the deformation retract given in \Cref{LemTquotient}, then
    \[e^{\int_{\beta} iB}\hol_{\partial \beta}(E)=e^{\int_{\beta''} iB''}\hol_{\partial \beta''}(E'').\]
    \item Let $p'=(p,q)\in L'=L\times Z$, and $p''=\pi(p,q)\in L''$. Then $\mu_{T_1}(L)=\mu_{T_1}(L'')$ and
    \[\exp(-i\kappa_{T_1}(p))\hol^p_{T_1}(E)=\exp(-i\kappa''_{T_1}(p''))\hol^{p''}_{T_1}(E'').\]
\end{enumerate}
\end{lem}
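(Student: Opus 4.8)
The plan is to verify the five items in order of increasing difficulty. Items (i), (iii), and (v) will be essentially bookkeeping consequences of the defining relations \eqref{basicEq}--\eqref{brane2}; item (ii) will be a short Cartan-calculus computation; and item (iv) will be the computational heart, where the curvature correction $\langle C,\Omega\rangle$ in \eqref{brane1} is forced upon us.

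For (i), I would note that $B''_0$ is closed by construction and $C\in\lt_2^\vee$ is a constant, so $dB''=\langle C,d\Omega\rangle$; since $T_2$ is abelian, the Bianchi identity gives $d\Omega=0$ and hence $B''$ is closed. For (ii), I would fix a $G$-invariant connection $\theta$ and use the decomposition \eqref{conndecomp}. For $\xi\in\mathfrak{su}(2)$ the defining property $\theta(\xi^\sharp)=\xi$ gives $\iota_{\xi^\sharp}\theta_{T_2}=0$, and since the direct-product structure makes $\theta_{T_2}$ and $\kappa'_{T_2}$ both $\operatorname{SU}(2)$-invariant, Cartan's formula yields $\iota_{\xi^\sharp}d\langle\kappa'_{T_2},\theta_{T_2}\rangle=0$ and $\iota_{\xi^\sharp}\Omega=0$. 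Contracting \eqref{basicEq} with $\xi^\sharp$ then gives $\iota_{\xi^\sharp}B'=\pi^*\iota_{\xi^\sharp}B''$, whereas \eqref{B'eq} gives $\iota_{\xi^\sharp}B'=-\pi^*d\langle\kappa''_{\operatorname{SU}(2)},\xi\rangle$; since $\pi^*$ is injective on forms on $Y''$, this proves (ii). Item (iii) is then immediate: \eqref{Beq2} forces $\kappa''_{T_2}-\tau\equiv C$ on $L''$, so substituting into \eqref{Beq1} and comparing with \eqref{brane1} gives $d\gamma''=B''|_{L''}$.

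For (v), I would first observe that $\mu_{T_1}$ is unchanged by pullback to $Y'$ and by the $T_2$-reduction, because $T_1\subset\operatorname{SU}(2)$ commutes with $T_2$; this gives $\mu_{T_1}(L)=\mu_{T_1}(L'')$. For the holonomy, a $T_1$-orbit is generated by $\xi^\sharp$ with $\xi\in\lt_1\subset\mathfrak{su}(2)$, so $\theta_{T_2}$ annihilates it and the second term of \eqref{basicconn} contributes nothing around such an orbit; hence $\hol^{p'}_{T_1}(E')=\hol^{p''}_{T_1}(E'')$, while $\hol^p_{T_1}(E)=\hol^{p'}_{T_1}(E')$ because $E'$ is pulled back along $L'\to L$. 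Combined with $\kappa_{T_1}(p)=\kappa''_{T_1}(p'')$, again by pullback and descent, this yields the asserted equality of equivariant holonomies.

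The main obstacle will be (iv). I would fix a representative $u$ of $\beta$ and set $u'=(u,\mathrm{pt})$ with $\mathrm{pt}$ on the zero section, so that $\partial u'\subset L'$ already lies in the level set; since $B'$ is closed and $r$ is the identity on $L'$, retracting $u'$ into $(\mu^{Y'}_{T_2})^{-1}(c)$ changes neither $\int iB'$ nor the holonomy, so I may assume $u'$ lands in the level set with image $u''=\pi\circ u'$ representing $\beta''$. Then Stokes applied to \eqref{basicEq} gives $\int_{u'}iB'=\int_{u''}iB''_0+i\oint_{\partial u'}\langle\kappa'_{T_2},\theta_{T_2}\rangle$, and \eqref{basicconn} gives $\log\hol_{\partial u'}(E')=\log\hol_{\partial u''}(E'')-i\oint_{\partial u'}\langle\tau,\theta_{T_2}\rangle$. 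Adding these, the boundary terms combine into $i\oint_{\partial u'}\langle\kappa'_{T_2}-\tau,\theta_{T_2}\rangle=i\langle C,\oint_{\partial u'}\theta_{T_2}\rangle$ on $L'$. The decisive move is a second application of Stokes, $\oint_{\partial u'}\theta_{T_2}=\int_{u'}d\theta_{T_2}=\int_{u''}\Omega$, which turns this correction into exactly $i\int_{u''}\langle C,\Omega\rangle$ and recombines with $\int_{u''}iB''_0$ to give $\int_{u''}iB''$ by \eqref{brane1}; exponentiating yields (iv). The only delicate point is to invoke well-definedness of $e^{\int iB}\hol(E)$ on disc classes rel $L$ (a consequence of $F_E=iB|_L$), which licenses the choice of representative. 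I expect this bookkeeping of area-plus-holonomy, and in particular the way the curvature term $\langle C,\Omega\rangle$ emerges from $\oint\theta_{T_2}$, to be the crux of the lemma and the reason $B''$ must carry the correction in \eqref{brane1} rather than equal $B''_0$.
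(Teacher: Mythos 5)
Your proposal is correct and follows essentially the same route as the paper's proof: Cartan calculus with the decomposition \eqref{conndecomp} for (i)--(iii), the double application of Stokes on $u'$ and $u''$ with the constant $C=\kappa''_{T_2}-\tau$ producing the $\langle C,\Omega\rangle$ correction for (iv), and the vanishing of $\theta_{T_2}$ on $\mathfrak{su}(2)$-generated directions for (v). Your extra care in (iv) about pushing the representative into the level set via the retraction $r$ (using closedness of $B'$ and $r|_{L'}=\operatorname{id}$) is a point the paper treats implicitly, but the argument is the same.
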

Now assume \Cref{checkings} holds. Items (i) and (ii) say that $\tilde {B}''=B''-\kappa_{G_1}''$ is an equivariant B-field. Item (iii) says that $\bL''=(L'',E'')$ is a Lagrangian brane with respect to $B''$. Recall that
\begin{align*}
  df(\bL)&=\sum_{\beta} n_{\beta} e^{i\int_{\beta} B}\hol_{\partial \beta}(E) \T^{\int_{\beta}\omega}[\partial \beta];\\
  df(\bL'')&=\sum_{\beta} n_{\beta} e^{i\int_{\beta''} B''}\hol_{\partial \beta''}(E'') \T^{\int_{\beta}\omega}[\partial \beta''].
\end{align*}
By \Cref{LemTquotient} and item (iv), it is now clear that $df(\bL'')=\psi(df(\bL))$. Moreover, item (v) is equivalent to $\fz_{T_1}(\bL)=\fz_{T_1}(\bL'')$. It remains to prove \Cref{checkings}, which follows from direct calculations below.
\begin{proof}[Proof of \Cref{checkings}]
    \begin{enumerate}[(i)]
        \item It is clear because $B''_0$ and $\Omega$ are closed, and $C$ is a constant.
        \item Let $\xi\in \lig_1$, then 
        \begin{align*}
            \pi^*\iota_{\xi^\sharp}B''&= \iota_{\xi^\sharp}(B'+d\langle\kappa'_{T_2},\theta_{T_2}\rangle-\langle C,d\theta_{T_2}\rangle)\\
            &= d\langle\pi^*\kappa''_{G_1},\xi\rangle-d\langle\kappa'_{T_2},\iota_{\xi^\sharp}\theta_{T_2}\rangle-\langle C,\iota_{\xi^\sharp}d\theta_{T_2}\rangle.
        \end{align*}
        Note that $\iota_{\xi^\sharp}\theta_{T_2}$ vanishes by the decomposition \eqref{conndecomp}. Also,
        \[
        \iota_{\xi^\sharp}d\theta_{T_2}=\mathcal L_{\xi^\sharp}\theta_{T_2}-
        d\iota_{\xi^\sharp}\theta_{T_2}=0
        \]
        because the
        $G_1$- and $T_2$-actions commute. This proves (ii).
        \item It follows from \Cref{Beq1,brane1}.
        \item Let $u:\D\to Y$ be a map representing $\beta$, $u'=u\times q:\D\to Y'$ for some $q\in Z$, and $u''=\pi\circ r\circ u':\D\to Y''$, where $r$ is the deformation retract given in \Cref{LemTquotient}. Then $u''$ represents the class $\beta''$. We have
        \begin{align*}
            e^{\int_{\beta} iB}\hol_{\partial \beta}(E)
            =&\exp \left(i\int_{u}B-i\int_{\partial u} \gamma\right)\\
            =&\exp \left(i\int_{u'}B'-i\int_{\partial u'} \gamma'\right)\\
            =&\exp \left(i\int_{u'}(\pi^*B'_0-d(\langle \kappa_{T_2}',\theta_{T_2}\rangle))-i\int_{\partial u'} (\pi^*\gamma''+\langle \pi^*\tau, \theta_{T_2}\rangle)\right)\\
            =&\exp \left(i\int_{u'}\pi^*B'_0-i\int_{\partial u'} (\pi^*\gamma''+\langle C, \theta_{T_2}\rangle)\right),
        \end{align*}
        and
        \begin{align*}
            e^{\int_{\beta''} iB''}\hol_{\partial \beta''}(E'')
            =&\exp \left(i\int_{u''}(B''_0-\langle C,\Omega\rangle)-i\int_{\partial u''} \gamma''\right)\\
            =&\exp \left(i\int_{u'}(\pi^*B'_0-\langle C,d\theta_{T_2}\rangle)-i\int_{\partial u'} \pi^*\gamma''\right)\\
            =&\exp \left(i\int_{u'}\pi^*B'_0-i\int_{\partial u'}( \pi^*\gamma''+\langle C, \theta_{T_2}\rangle)\right).
        \end{align*}
        This proves the statement. 
        \item $\mu_{T_1}(L)=\mu_{T_1}(L'')$ follows from the construction. To prove the other equality, it suffices to consider the case $T_1\cong S^1$. We then have
        \begin{align*}
            \exp(-i\kappa_{T_1}(p))\hol^p_{T_1}(E)&=\exp(-i\kappa'_{T_1}(p'))\hol^{p'}_{T_1}(E')\\
            &=\exp\left(-i\kappa'_{T_1}(p')-i\int_{T_1\cdot p'}\gamma'\right)
        \end{align*}
        Similarly,
        \begin{align*}
            \exp(-i\kappa''_{T_1}(p''))\hol^{p''}_{T_1}(E'')&=\exp\left(-i\kappa''_{T_1}(p'')-i\int_{T_1\cdot p''}\gamma''\right)\\
            &=\exp\left(-i\kappa'_{T_1}(p')-i\int_{T_1\cdot p'}\pi^*\gamma''\right).
        \end{align*}
        Using \Cref{basicconn}, it suffices to show that
        \[\int_{T_1\cdot p'}\langle \pi^*\tau, \theta_{T_2}|_{L'}\rangle=0.\]
        However, this is a consequence of the decomposition (\ref{conndecomp}).
    \end{enumerate}
\end{proof}
This finishes the proof of \Cref{prop:equality_df_fz}.
\end{proof}

\begin{proof}[Proof of \Cref{thm2}]We will perform an induction on the rank $r$ of $G$. 

If $r=1$, then $\ker(\alpha)=0$, and hence $df(\bL)=0$. \Cref{H1gen} implies that $HF^\bullet(\bL,\bL)\neq 0$, and the result follows from \Cref{thm1}.

Now suppose $\rank G=r>1$, and assume the theorem holds for any group of smaller rank. Let $\rho:\operatorname{SU}(2)\to G$ be the homomorphism determined by $\alpha$, and $T_1\cong S^1$ be the subgroup of diagonal matrices in $\operatorname{SU}(2)$. Let $T_2\subset T$ be the identity component of $T^{s_\alpha}$. We can replace $G$ with $\operatorname{SU}(2)\times T_2$ safely without affecting the assumptions and conclusion of \Cref{thm2}. We remark that the condition $\fz_T(\bL)\in \ker(\alpha^\vee)$ is equivalent to $\fz_{T_1}(\bL)=1$.

If the restriction of $o_*$ to $H_1(T_2;\F)$ is not injective, then we can find a subtorus $T'\subset T_2$ such that $o_*(H_1(T';\F))=o_*(H_1(T_2;\F))$. Therefore, we can replace $G$ with $\operatorname{SU}(2)\times T'$, and the result follows from induction.

So we may assume the restriction of $o_*$ to $H_1(T_2;\F)$ is injective. Let $Z$ be a smooth simply connected manifold equipped with a free $G$-action. The product $Y'=Y\times T^*Z$ is equipped with the diagonal Hamiltonian $G$-action, and $Y''=Y'//_cT_2$, where $c=\mu_{T_2}(L)$, is equipped with a Hamiltonian $\operatorname{SU}(2)$-action. Then $L'=L\times Z\subset Y'$ is a $T$-invariant Lagrangian, and $L''=L'/T_2\subset Y''$ is a $T_1$-invariant Lagrangian. 

By \Cref{prop:equality_df_fz}, there exists a Lagrangian brane $\bL''=(L'',E'')$ so that $df(\bL'')=\psi(df(\bL))$ and $\fz_{T_1}(\bL)=\fz_{T_1}(\bL'')$. Since the composition
\[H_1(T_2;\F)\xrightarrow{o_*}
H_1(L;\F)\xrightarrow{\psi} H_1(L'';\F)\]
is zero, the assumption $df(\bL)\in o_*(H_1(T_2;\F))=o_*(\ker(\alpha))$ implies that $df(\bL'')=0$.

We now choose $Z$ so that $\pi_k(Z)=0$ for $k\leq \dim L$. We will show that this implies $HF^\bullet(\bL'',\bL'')\neq 0$. We compute $HF^\bullet(\bL'',\bL'')$ using the canonical model $(H^\bullet(L'';\F),\{m_k\}_{k\geq 0})$. 

We will show that the unit
\[
1\in H^0(L'';\mathbb F)
\]
defines a nonzero class in
\[
HF^\bullet(\bL'',\bL'').
\]
Let $x\in H^k(L'';\F)$ and $\beta\in \pi_2(Y, L)$. It suffices to show that $m_{1,\beta''}(x)$ has no degree 0 component, or equivalently, $\langle m_{1,\beta''}(x),\pt\rangle=0$.

Suppose first $k>\dim L$. By \Cref{LemTquotient} we have the following commutative diagram for $j=0,1$.
        \begin{equation*}
            \begin{tikzcd}
                &\mathfrak{M}_2(L'',\beta'')\ar[r]\ar[d,"\operatorname{ev}_j"]&Z/T_2\ar[d,equal]\\
               L\ar[r] &L''\ar[r,"\pr"]&Z/T_2.
            \end{tikzcd}
        \end{equation*}
Let $S$ be a cycle Poincar\'e dual of $x$. Since $x\in H^{>\dim L}$, $\dim S<\dim Z/T_2$ and $\pr(S)$ is a proper subset of $Z/T_2$. Let $p''\in L''$ be a point so that $\pr(p'')\not \in \pr(S)$. As a result $\operatorname{ev}_0^{-1}(S)\cap \operatorname{ev}_1^{-1}(p'')=\varnothing$, and hence $\langle m_{1,\beta''}(x),\pt\rangle=0$.

We next claim that $H^{\leq \dim L}(L'';\F)$ is generated by $H^1(L'';\F)$. Our assumption on $Z$ implies $H^k(L'';\F)=H^k_{T_2}(L;\F)$ for $k\leq \dim L$, so it suffices to prove that $H^{\bullet}_{T_2}(L;\F)$ is generated by $H^1_{T_2}(L;\F)$. This follows from the decomposition
\begin{equation}\label{eq:decomposition_H}
    H^\bullet(L;\F)\cong H^\bullet_{T_2}(L;\F)\otimes H^\bullet(T_2;\F)
\end{equation}
and the assumption that $H^\bullet(L;\F)$ is generated in degree 1.

We now prove \eqref{eq:decomposition_H}. By induction on $\operatorname{rank}T_2$, we may assume $T_2\cong S^1$. Since $o_*$ is injective, there exists an $S^1$-invariant closed one-form $\theta$ whose restriction to each $S^1$-orbit represents
the generator of $H^1(S^1,\R)$. Then $i\theta$ is a
connection one-form for the principal $S^1$-bundle
\[
L\longrightarrow L/S^1.
\]
Its curvature vanishes, and~\eqref{eq:decomposition_H} follows from the
Leray--Hirsch theorem.

By \Cref{H1gen}, $m_1=0$ on $H^{\leq \dim L}(L'';\F)$. It now follows that $HF^\bullet(\bL'',\bL'')\neq 0$. Therefore, \Cref{thm1} implies $\fz_{T_1}(\bL)=\fz_{T_1}(\bL'')\in Z(\operatorname{SO}(3,\C))=\{1\}$, and the theorem is proved.
\end{proof}

\appendix
\section{Lagrangian Floer cohomology}\label{appendix:Floer_theory}
For a pair of compact connected relatively spin Lagrangian submanifolds $L_1,L_2\subset Y$, one can define their Lagrangian Floer cohomology; see \Cite{FOOOI,Seidelbook,BiranOctav}. When $L_1=L_2=L$, the Floer cochain complex carries a filtered
$A_\infty$-algebra structure. We follow the approach of \cite{FOOOI}. The generalization incorporating a $B$-field is discussed in \cite{FukB,CHO}.

For each connected component $C$ of $L_1\cap L_2$, we let $\Omega^\bullet(C,\Hom(E_1|_C,E_2|_C))$ be any standard cochain model (e.g. de Rham, Morse, etc., see \Cite{FOOOI,Kuranishi,KLZ,BiranOctav,LLL}) that computes the cohomology group $H^\bullet(C,\Hom(E_1|_C,E_2|_C))$. We define
\[CF^\bullet(\bL_1,\bL_2)=\bigoplus_{C\in \pi_0(L_1\cap L_2)}\Omega^\bullet(C,\Hom(E_1|_C,E_2|_C)\otimes \Theta)\otimes \F.\]
Here, $\Theta$ is a $\mathbb Z_2$-local system on
$L_1\cap L_2$ that depends on the relative spin structures of
$L_1$ and $L_2$; see \cite[Section~8.8]{FOOOII}. Since we assume $T$ preserves $\omega, J$, as well as the relative spin structures on $L_1$ and $L_2$, it follows immediately from the construction in \emph{loc.\ cit.} that $\Theta$ is $T$-equivariant.

\begin{lem}\label{ThetaLemma}
    If $T=S^1$ and the intersection $L_1\cap L_2$ is clean, then $\hol_{S^1}(\Theta)=1$ on each connected component $C$ of $L_1\cap L_2$. 
\end{lem}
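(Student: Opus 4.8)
The plan is to prove that the $\Z_2$-local system $\Theta$ has trivial monodromy around every $S^1$-orbit in a connected component $C$ of the clean intersection $L_1 \cap L_2$. Since $\Theta$ takes values in $\Z_2 = \{\pm 1\}$, its holonomy around a loop $\gamma$ is a sign, and I must show this sign is $+1$ for the generating orbit loops. The essential input is the hypothesis that $G = S^1$ acts preserving $\omega$, $J$, and crucially the relatively spin structures on both $L_1$ and $L_2$, so that $\Theta$ is genuinely $S^1$-equivariant as stated in the paragraph preceding the lemma.

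First I would recall the explicit construction of $\Theta$ from section 8.8 of \cite{FOOOII}: along a clean intersection, $\Theta$ is built as the orientation local system of the determinant line of the linearized Cauchy–Riemann operator, twisted by the chosen relative spin structures on $L_1$ and $L_2$. Its monodromy around a loop measures the failure of these orientation data to be consistently trivializable along that loop. I would then observe that the $S^1$-equivariance of all the defining data (the relatively spin structures, the almost complex structure, the symplectic form, and hence the index bundle) implies that $\Theta$ is a pullback of a local system on the quotient, or equivalently that its holonomy is constant along the family of loops obtained by sliding an orbit-loop via the $S^1$-action. The key geometric point is that an $S^1$-orbit loop $\gamma_p : z \mapsto z \cdot p$ bounds — it is homotopic within $C$ to the constant loop at a nearby point only if the orbit is contractible, which it generally is not, so I cannot argue by contractibility alone.

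The cleaner route, which I expect to carry the argument, is to use that the holonomy of $\Theta$ around the orbit loop is computed by the parity of a spectral flow / Maslov-type index of the family of Cauchy–Riemann operators obtained by transporting the Lagrangian data along the orbit, and that this index is forced to vanish mod $2$ because the family is pulled back from the circle $S^1$ acting by \emph{orientation-preserving} isometries compatible with the spin structures. Concretely, since the $S^1$-action preserves the relatively spin structures, transport around a full orbit returns the spin data to itself \emph{via a path in the structure group that is homotopically trivial in the relevant sense}, so the induced sign on the determinant line is trivial. I would phrase this as: the $S^1$-equivariant trivialization of the relative spin structure induces a trivialization of the orientation line bundle along each orbit, whence $\hol_{S^1}(\Theta) = 1$.

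The main obstacle will be making precise the claim that $S^1$-invariance of the relatively spin structures forces the monodromy sign to be $+1$, rather than merely constant along the orbit. The subtlety is that a local system can be $S^1$-equivariant and still have nontrivial monodromy around the orbit itself (the monodromy being an $S^1$-equivariant datum, namely a character of $\pi_1(S^1) = \Z$ landing in $\Z_2$). The resolution must come from the specific origin of $\Theta$ in the \emph{relative} spin structure: because the relative spin structure is a trivialization of $w_2$ pulled back from $Y$ together with orientation data, and because $S^1$ acts trivially on $H^2$ of the relevant background bundle (being connected), the obstruction class defining the monodromy lives in a group on which the connected group $S^1$ acts trivially, forcing it to equal its own image under the action — but more is needed, namely that the equivariant structure \emph{splits} the potential $\Z_2$. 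I would close this gap by invoking that a $\Z_2$-local system which is equivariant for a connected group action and whose monodromy is computed by an index that is additive under the $S^1$-family degenerates: the connectedness of $S^1$ together with the $S^1$-invariance of the spin data means the determinant line extends as an $S^1$-equivariant line over the orbit, and an equivariant real line bundle over $S^1/S^1 = \pt$ lifted back is trivial, giving holonomy $1$.
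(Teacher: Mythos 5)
Your concluding sentence is, in essence, the paper's entire proof: the paper simply pulls back $\Theta$ along an orbit map $f\colon S^1\to S^1p$ and observes that $f^*\Theta$ is an $S^1$-equivariant local system on $S^1$ with the free, transitive translation action, hence descends to the point quotient and is trivial. None of the spectral-flow, Maslov-index, or determinant-line material in your second and third paragraphs is needed, and as written it is never actually completed: the claim that the relevant index ``is forced to vanish mod $2$'' is asserted, not proved. Moreover, your stated worry that a genuinely equivariant local system on the orbit could still carry monodromy given by ``a character of $\pi_1(S^1)=\Z$ landing in $\Z_2$'' conflates weak equivariance (each $g^*\Theta\cong\Theta$, which is automatic for a connected group since every $g$ is isotopic to the identity) with the coherent equivariant structure that $\Theta$ actually inherits from the $T$-invariance of all its defining data. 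For the free translation action the M\"obius system admits no coherent equivariant structure: on $S^1\times S^1$ the two pullbacks $a^*\Theta$ and $\pr^*\Theta$ have different monodromies around the group factor. So descent settles the question with no index-theoretic input at all.

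There is, however, one genuine soft spot in your resolution: the step ``an equivariant real line bundle over $S^1/S^1=\pt$ lifted back is trivial'' silently assumes the orbit is free, whereas the lemma is invoked (Step 2 of the proof of \Cref{thm1}) precisely when the $T_1$-action is only \emph{locally} free. On an orbit $S^1/\Z_k$ with $k$ even, an $S^1$-equivariant $\Z_2$-local system corresponds to a character of the stabilizer $\Z_k$ and its underlying local system can be the M\"obius system; thus ``holonomy $+1$ around the generating orbit loop,'' which is what you set out to prove, is false in general. What is true, and what $\hol_{S^1}(\Theta)$ means in this paper (compare the definition of $\hol^p_T(E)$ via the orbit map $t\mapsto tp$), is that the monodromy of the pullback $f^*\Theta$ to the \emph{source} circle is trivial --- this is the $k$-th power of the monodromy around the orbit --- and that is exactly what the free-action descent argument gives when applied on the source rather than on the orbit. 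Your proof becomes correct, and coincides with the paper's, once ``the orbit loop'' is replaced throughout by ``the pullback along the orbit map.''
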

\begin{proof}
    Let $f:S^1\to S^1\cdot p$ be any orbit map. Then $f^*\Theta$ is an $S^1$-equivariant local system on $S^1$, hence must be trivial.
\end{proof}

To define the Floer differential, let $C, C'\in \pi_0(L_1\cap L_2)$. We consider $J$-holomorphic maps
\[
u:\Sigma\longrightarrow Y
\]
from a genus-zero bordered Riemann surface $\Sigma$ with two
boundary marked points $z,z'$ such that:
\begin{enumerate}
\item $u(z)\in C$ and $u(z')\in C'$;
\item the clockwise boundary arc from $z'$ to $z$ is mapped to
$L_1$, while the complementary boundary arc from $z$ to $z'$ is
mapped to $L_2$.
\end{enumerate}
Each such $u:\Sigma\to Y$ defines a parallel transport \[P_u:\Hom(E_1|_{u(z)},E_2|_{u(z)})\to \Hom(E_1|_{u(z')},E_2|_{u(z')})\]
given by $\ell\mapsto P_{E_2}([z,z'])\circ\ell\circ P_{E_1}([z',z])$, where $P_{E_1}([z',z])$ is the parallel transport of $E_1$ along the arc $u([z',z])$ and similarly for $P_{E_2}([z,z'])$. 

Let $\mathfrak{M}_2(L_1,L_2;C,C')$ be the moduli space of all such maps $u$, defined using the theory of Kuranishi structures \cite{Kuranishi}.  

The connected components of $\mathfrak{M}_2(L_1, L_2;C,C')$ are labeled by the homotopy classes of the map $u$. Let $\beta$ be such a homotopy class, we write $\mathfrak{M}_2(L_1, L_2;C,C';\beta)$ (or $\mathfrak{M}_2(L_1, L_2;\beta)$ if $C$ and $C'$ are obvious) for the corresponding component, whose expected dimension is equal to $\frac{1}{2}\dim Y+\mu(\beta)-1$ where $\mu(\beta)$ is the Maslov index of $\beta$. Consider the diagram
\begin{equation}\label{pullpush}
    \begin{tikzcd}
    &\mathfrak{M}_2(L_1,L_2;C,C';\beta)\ar[ld,"{ev_{z}}" swap]\ar[rd,"ev_{z'}"]\\
    C&&C',
\end{tikzcd}
\end{equation}
where $ev_{z}$ and $ev_{z'}$ are the evaluation maps at $z$ and $z'$, respectively. 
Let 
\[\delta_{L_1,L_2,C,C',\beta}:\Omega^\bullet(C,\Hom(E_1|_C,E_2|_C)\otimes \Theta)\otimes \F\to \Omega^\bullet(C',\Hom(E_1|_{C'},E_2|_{C'})\otimes \Theta)\otimes \F\]
be the linear map obtained from pull-push using diagram (\ref{pullpush}) and twisted by $P_\beta$. Explicitly, 
\[\delta_{L_1,L_2,C,C',\beta}=(ev_{z'})_*\circ P_\beta\circ ev^*_{z}\]
where $P_\beta: \Hom(ev_{z}^*E_1,ev_{z}^*E_2)\to \Hom(ev_{z'}^*E_1,ev_{z'}^*E_2)$ is the holonomy induced by $P_u$ defined above.
Now the Floer differential
\begin{equation*}
   \delta=\delta_{L_1,L_2}:CF^\bullet(\bL_1,\bL_2)\to CF^\bullet(\bL_1,\bL_2)
\end{equation*}
 is defined by the formula
 \begin{equation*}
     \delta=\sum_{C,C',\beta}\exp\left(i\int_\beta B\right)\T^{\int_\beta\omega}\delta_{L_1,L_2,C,C',\beta}.
 \end{equation*}
 When $\delta^2=0$, the cohomology of this complex is denoted by $HF^\bullet(\bL_1,\bL_2)$.

 We recall the following definition (\cite{FukB}). 
 \begin{df}
 We say two Lagrangian branes $\bL_1=(L_1,E_1)$ and $\bL_2=(L_2,E_2)$ are Hamiltonian equivalent if there exists 
 \begin{enumerate}
\item a Hamiltonian isotopy
\[
\psi_t:Y\longrightarrow Y,
\qquad t\in[0,1],
\]
with $\psi_0=\operatorname{id}$ and $\psi_1(L_1)=L_2$;

\item a unitary line bundle $\mathcal E\to L_1\times[0,1]$
equipped with a connection $\nabla^{\mathcal E}$ such that, for
each $t\in[0,1]$,
\[
F_{\nabla^{\mathcal E}|_{L_1\times\{t\}}}
=
i(\psi_t|_{L_1})^*B;
\]

\item connection-preserving isomorphisms
\[
(\mathcal E,\nabla^{\mathcal E})|_{L_1\times\{0\}}
\cong
(E_1,\nabla_{E_1})
\]
and
\[
(\mathcal E,\nabla^{\mathcal E})|_{L_1\times\{1\}}
\cong
(\psi_1|_{L_1})^*(E_2,\nabla_{E_2}).
\]
\end{enumerate}
If the Hamiltonian isotopy and the line-bundle data are
$T$-equivariant, we say that $\underline L_1$ and
$\underline L_2$ are $T$-equivariantly Hamiltonian equivalent.
 \end{df}
 
 In this paper, we consider only the case when $\bL_1$ is weakly unobstructed and $\bL_2$ is Hamiltonian equivalent to $\bL_1$. This implies $\delta^2=0$ (\cite{FOOOI}). Moreover, if $\bL'_2$ is Hamiltonian equivalent to $\bL_2$, then
\begin{equation}\label{HamIso}
    HF^\bullet(\bL_1,\bL_2)\cong HF^\bullet(\bL_1,\bL_2').
\end{equation}
This is \cite[Theorem~G.4]{FOOOI}. 

We also need the following inequality:
\begin{equation}\label{inequality}
\dim_{\F}HF^\bullet(\bL_1,\bL_2)\leq\dim_{\F}H^\bullet(L_1\cap L_2;\Hom(E_1,E_2)\otimes \Theta\otimes \F).
\end{equation}
This is \cite[Theorem~G.2]{FOOOI}. It follows from a spectral
sequence converging to Floer cohomology whose $E_2$-page is given
by the classical cohomology, with coefficients in
\[
\operatorname{Hom}(E_1,E_2)\otimes\Theta,
\]
tensored with the appropriate associated graded Novikov ring; see
also \cite[Theorem~6.1.4]{FOOOII}. In particular, if the classical cohomology with coefficients in $\operatorname{Hom}(E_1,E_2)\otimes\Theta$ vanishes, then the Floer cohomology also vanishes.

It is known that the Floer cochain complex $CF^\bullet(\bL,\bL)$ carries a filtered $A_\infty$-algebra structure that is quasi-isomorphic to its canonical model 
\[
(H^\bullet(L),\{m_k\}_{k\geq 0});
\]
see \cite{FOOOI,Canonicalmodel}.
\begin{lem}\label{H1gen}
    Assume the minimal Maslov number of $L$ is at least $2$. Let $k$ be a non-negative integer. Suppose $df(\bL)=0$ and $H^{\leq k}(L;\F)$ is contained in the subalgebra (with respect to the wedge product) of $H^\bullet(L;\F)$ generated by $H^1(L;\F)$. Then $m_1$ vanishes on $H^{\leq k}(L;\F)$. In particular, if $H^\bullet(L;\F)$ is generated in degree 1, then $m_1=0$.
\end{lem}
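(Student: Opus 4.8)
The plan is to work inside the canonical model $(H^\bullet(L,\Lambda),\{m_k\}_{k\geq 0})$ and to exploit the interaction between the classical wedge product and the disc-instanton corrections. I would decompose each operation as $m_k=\sum_\beta e^{i\int_\beta B}\hol_{\partial\beta}(E)\T^{\int_\beta\omega}\,m_{k,\beta}$, where $m_{k,\beta}$ counts holomorphic discs in the class $\beta$ and, by the dimension formula for the moduli spaces, shifts cohomological degree by $2-k-\mu(\beta)$. The constant class gives the classical structure $m_{1,0}=0$ and $m_{2,0}=\wedge$, while the hypothesis on the minimal Maslov number forces $\mu(\beta)\geq 2$ for every nonconstant $\beta$. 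In particular the instanton corrections to $m_1$ strictly lower degree, and those to $m_2$ lower it by at least $2$; these two facts are the engine of the whole argument.

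First I would settle the degree-$1$ case. For $x\in H^1(L,\Lambda)$ the only $m_{1,\beta}$ that can be nonzero are those with $2-\mu(\beta)\geq 0$, so together with $\mu(\beta)\geq 2$ only Maslov-$2$ classes contribute and $m_1(x)$ lands in $H^0(L,\Lambda)=\Lambda\cdot 1$. Reading off the coefficient of the unit identifies it, up to sign, with the natural pairing $\langle df(\bL),x\rangle$ between $H_1(L,\Lambda)$ and $H^1(L,\Lambda)$; since $df(\bL)=0$ by hypothesis, $m_1$ vanishes on $H^1$. Together with the strict unitality of the canonical model, which gives $m_1(1)=0$, this disposes of degrees $0$ and $1$.

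Next I would record the Leibniz relation. Writing out the $A_\infty$ relation with two external inputs, the only curvature terms are of the form $m_3(\cdots,m_0,\cdots)$, which vanish because $m_0$ is a multiple of the unit and the model is unital; what remains is the derivation identity $m_1(m_2(u,x))=\pm m_2(m_1 u,x)\pm m_2(u,m_1 x)$. I would then prove, by induction on the cohomological degree $d\leq k$, that $m_1$ vanishes on $H^d(L,\Lambda)$. Using the degree-$1$ generation hypothesis, an element of $H^d$ is a sum of wedge products, so it suffices to treat $w=u\wedge x$ with $u\in H^{d-1}$ a product of degree-$1$ classes and $x\in H^1$. Because $m_2(u,x)=u\wedge x+R$ with $R\in H^{\leq d-2}$ (the corrections lowering degree by at least $\mu(\beta)\geq 2$), the Leibniz relation and the inductive hypotheses $m_1u=0$ and $m_1x=0$ give $m_1(u\wedge x)=-m_1(R)$; and $m_1(R)=0$ again by the inductive hypothesis applied in degrees $\leq d-2$. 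Hence $m_1(w)=0$, closing the induction.

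The main obstacle I anticipate is precisely the mismatch between the product for which the generation hypothesis is stated, the classical wedge $m_{2,0}$, and the full product $m_2$ for which the $A_\infty$ Leibniz rule holds. The resolution is the degree-lowering property of the instanton corrections: the discrepancy $R=m_2(u,x)-u\wedge x$ always lives in strictly lower cohomological degree, so an induction on degree absorbs it. Care is also needed to ensure the canonical model may be taken strictly unital, so that the curvature terms drop out of the Leibniz relation and $m_1(1)=0$ holds, which I would cite from the construction of the unital canonical model.
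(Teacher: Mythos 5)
Your proof is correct and follows essentially the same route as the paper's: establish the degree-$1$ case via the identification of $m_1|_{H^1}$ with the pairing against $df(\bL)$, then induct on degree using the $A_\infty$ Leibniz rule together with the fact that the minimal Maslov number $\geq 2$ forces the corrections $m_2(u,x)-u\wedge x$ to lie in strictly lower degree. Your explicit justification that the curvature terms $m_3(\dots,m_0,\dots)$ drop out by weak unobstructedness and strict unitality of the canonical model is a point the paper leaves implicit, but it is the same argument.
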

\begin{proof}
We prove the lemma by induction on $k$. The case $k=0$ is obvious, and the case $k=1$ follows from the definition, see the proof of Proposition 6.9 in \cite{AurouxComp}. 

Suppose $k>1$, and the conclusion is known for smaller $k$. Let $z\in H^k(L;\F)$. We can write $z=\sum_i x_i\wedge y_i$, where $x_i\in H^1(L;\F)$ and $y_i\in H^{k-1}(L;\F)$. We have
    \[\sum_im_2(x_i,y_i)=z+\sum_{i,\beta}m_{2,\beta}(x_i,y_i),\]
    where the sum is over all non-constant effective disc classes $\beta\in \pi_2(Y,L)$.     

    By the induction hypothesis,
    \[
    m_1\bigl(m_{2,\beta}(x_i,y_i)\bigr)=0
    \]
    for every nonconstant effective class $\beta$, since $m_{2,\beta}(x_i,y_i)$ has degree $k-\mu(\beta)<k$. Moreover, the $A_\infty$-relations give
    \[
    m_1\left(\sum_i m_2(x_i,y_i)\right)=\sum_i\left(m_2(m_1(x_i),y_i)\pm m_2(x_i,m_1(y_i))\right)=0,
    \]
    where the terms involving $m_0$ vanish by weak unobstructedness and strict unitality. It follows that $m_1(z)=0$. 
\end{proof}

\printbibliography
\end{document}